\newcommand{\bm}[1]{\mbox{\boldmath{$#1$}}}
\newcommand{\dsum}{\displaystyle\sum}
\newcommand{\CB}{\mathcal{B}}
\newcommand{\CJ}{\mathcal{J}}
\newcommand{\CK}{\mathcal{K}}
\newcommand{\CS}{\mathcal{S}}
\newcommand{\RR}{\mathbb{R}}
\newcommand{\vc}{\mathbf{c}}
\newcommand{\vr}{\mathbf{r}}
\newcommand{\vs}{\mathbf{s}}
\newcommand{\vu}{\mathbf{u}}
\newcommand{\vv}{\mathbf{v}}
\newcommand{\vw}{\mathbf{w}}
\newcommand{\vx}{\mathbf{x}}
\newcommand{\vz}{\mathbf{z}}
\newcommand{\zv}{\mathbf{0}}
\newcommand{\ov}{\mathbf{1}}
\newcommand{\veta}{\bm{\eta}}
\newcommand{\vxi}{\bm{\xi}}
\newcommand{\valpha}{\bm{\alpha}}
\newcommand{\vbeta}{\bm{\beta}}
\newcommand{\vgamma}{\bm{\gamma}}
\newcommand{\vtheta}{\bm{\theta}}
\newcommand{\vlambda}{\bm{\lambda}}
\newcommand{\vnu}{\bm{\nu}}
\newcommand{\mA}{\mathbf{A}}
\newcommand{\mB}{\mathbf{B}}
\newcommand{\mD}{\mathbf{D}}
\newcommand{\mE}{\mathbf{E}}
\newcommand{\mF}{\mathbf{F}}
\newcommand{\mG}{\mathbf{G}}
\newcommand{\mH}{\mathbf{H}}
\newcommand{\mI}{\mathbf{I}}
\newcommand{\mL}{\mathbf{L}}
\newcommand{\mM}{\mathbf{M}}
\newcommand{\mR}{\mathbf{R}}
\newcommand{\mS}{\mathbf{S}}
\newcommand{\mU}{\mathbf{U}}
\newcommand{\tr}{^{\intercal}}
\newcommand{\inv}{^{-1}}
\newcommand{\hspp}{\hspace{8mm}}
\newcommand{\maximize}{\mbox{maximize\hspace{4mm} }}
\newcommand{\minimize}{\mbox{minimize\hspace{4mm} }}
\newcommand{\subto}{\mbox{subject to\hspace{4mm}}}
\newcounter{commentcounter}
\long\def\symbolfootnote[#1]#2{\begingroup
  \def\thefootnote{\fnsymbol{footnote}}\footnote[#1]{#2}\endgroup}
\newcommand{\lemnum}[2]{\vspace{3mm}
  \noindent {\sc Lemma #1}{\it #2} \vspace{3mm}}
\newcommand{\thmnum}[2]{\vspace{3mm}
  \noindent {\sc Theorem #1}{\it #2} \vspace{3mm}}
\newcommand{\mN}{\mathbf{N}}
\title{Masking Primal and Dual Models for \\ Data Privacy in Network Revenue Management}
\keywords{Data privacy; network revenue management; collaboration;
  resource sharing}
\begin{document}

\maketitle

\begin{abstract}
We study a collaborative revenue management problem where multiple decentralized parties agree to share some of their capacities. This collaboration is performed by constructing a large mathematical programming model available to all parties. The parties then use the solution of this model in their own capacity control systems. In this setting, however, the major concern for the parties is the privacy of their input data along with their individual optimal solutions. We first reformulate a general linear programming model that can be used for a wide-range of network revenue management problems. Then, we address the data-privacy concern of the reformulated model and propose an approach based on solving an equivalent data-private model constructed with input masking via random transformations. Our main result shows that after solving the data-private model, each party can safely access only its own optimal capacity control decisions. We also discuss the security of the transformed problem in the considered multi-party setting. We conduct simulation experiments to support our results and evaluate the computational efficiency of the proposed data-private model. Our work provides an analytical approach and insights on how to manage shared resources in a network problem while ensuring data privacy. Constructing and solving the collaborative network problem requires information exchange between parties which may not be possible in practice. Including data-privacy in decentralized collaborative network revenue management problems with capacity sharing is new to the literature and relevant to practice.
\end{abstract}

\section{Introduction.}
\label{sec:intro}

Forming alliances is an important business strategy for a firm to streamline its costs to remain competitive. Alliances can be considered as the collaboration among several parties to conduct various activities such as allocating resources, sharing information, and providing complementary services. These partnerships can also be observed among competitors, like several firms joining their professional assets to manage a supply chain network \citep{Granot05, He15}. In power networks, independent power suppliers operate on a distributed system where they work together to balance the power demand and supply \citep{Ghaderyan21}. Recently, logistics companies have started to collaborate by sharing empty vehicle capacities to overcome the problem of excess capacity in freight transportation \citep{speranza18}. One of the important advantages of collaboration is the increase in economies of scale. In airline revenue management, the carriers sign an alliance contract, called codeshare agreements, to share their flight capacities, and provide joint services. The companies can offer more products through joint services, leading to greater revenue opportunities in the long term \citep{topaloglu12}. A greater customer value can be achieved with increased flexibility in the provided services. In addition to economic benefits, the companies can also improve their reputation by improving the sustainability aspects of their operations \citep{Gansterer16}.

Coordination of partnerships or collaborations involves a series of challenges. Generally, in an alliance, parties pool some of their resources and share the necessary information for the collaborative decision making process. Besides coordinating shared resources, they also manage their individual local resources. Therefore, these parties, though often working towards a common goal, can be competitors and may be unwilling or unable to disclose complete information about their operations to protect their own interests. In addition, legal frameworks like the antitrust law force companies to take extra measures to protect their data \citep{Gerlach13, Wright14, Albrecht15}. Depending on the industry, this sensitive information may involve demand forecasts, selling prices, operational costs and available resources. For instance, in air-cargo transportation, airlines and freight forwarders collaborate to sell the flight capacity. In this partnership, the freight forwarders keep their demand information, operating costs, and reservation prices private to protect their interests \citep{amaruchkul11}. \cite{Gerlach13} report that sharing information obtained through dual variables in an airline alliance requires antitrust immunities, and in practice, airlines do not prefer to exchange such information to protect their own interests. Due to the restrictions in information exchange, decomposition and decentralization approaches have been studied to minimize information sharing between parties in collaborative networks \citep{poundarikapuram04, Albrecht15, Singh16, Ding19}. Research to date on decentralized collaborative decision-making problems has primarily adopted iterative negotiation-based approaches to decompose the centralized model, which requires complete information sharing. Although information exchange is reduced with the decentralized coordination and negotiation-based approaches, the parties still have to share some information on their individual operations, which may reveal confidential information about their activities. Several studies have pointed out that revealing primal or dual optimal solution can provide strategic advantage to other parties in the cooperation \citep{Albrecht15, Singh16, Lai2019, Sorolla20}. There is no mechanism available in the literature to coordinate independent cooperating parties while ensuring that the information shared by the parties remains private.

This paper considers a general setting for capacity collaboration in network revenue management problems, where applications can be found in airline alliances, air-cargo transportation \citep{Wright14, Houghtalen11}, collaborative logistics \citep{Adenso14, Gansterer16, Jin19}, decentralized supply chains \citep{Albrecht15, Singh16}. Considering the benefits of collaboration, we assume that multiple parties agree to collaborate by sharing some of their capacities. Each party also controls its own private capacities in addition to the shared resources. This collaboration is performed by constructing a large mathematical programming model available to all parties. The fundamental aim of the parties is to identify the optimal allocated capacities for the shared resources and to evaluate the opportunity costs of the capacities available to them, \textit{i.e.}, dual variables. These opportunity costs are used in various capacity control policies. For instance, dual variables are used in well-known \textit{bid-price control} policy to manage customer requests for quantity-based network revenue management problems \citep{Phillips05}. In our setting, the parties jointly compute the optimal capacity allocations and bid-prices of their collaborative model without disclosing any private information, such as selling prices and local capacity restrictions. However, without the necessary and mostly private information about the collaborative network problem, the correct values of bid-prices and the capacity allocations for the shared resources cannot be computed. This lack of proper information about the network problem raises an important question: How can one compute the correct bid-prices and the capacity allocations of the shared resources that maximize the overall revenue under privacy concerns? This question constitutes the main motivation behind our current study. Thus, by answering this question, we can provide a mechanism for the parties to collaborate without revealing their private data.

\paragraph{Contributions.} We present a new transformation-based approach that considers data privacy in collaborative network revenue management problem, where multiple parties agree to share some of the network capacities. The proposed approach allows partners to use their individual private data while solving the collaborative capacity control problem to identify the optimal capacity sharing setting for the alliance. In our setting, each party keeps its data private through random data masking. Unlike the previous decomposition methods proposed for decentralized collaborative resource sharing problems, this method does not require any unmasked information exchange among parties while solving the collaborative model. To the best of our knowledge, our approach is the first attempt in the literature to deal with data privacy, considering both primal and dual variables. Our analysis makes use of several previous privacy studies based on random transformations of the problem data. However, our focus on bid-prices allows us to extend these studies
with new results about the privacy of dual solutions. We show that the original primal and dual optimal solutions can be derived from the proposed transformed data-private model. Furthermore, in a separate section, we discuss the security of our mathematical model, where we apply a special set of random matrices ($M$-matrices) for transforming the simple inequalities. This set of matrices is much larger than the set of permutation matrices used in other studies which enhances the security of the proposed method. We also contribute to that literature with a new result showing that even if a private dataset of a firm is guessed, a brute-force approach to obtain random matrices in order to reveal primal and dual optimal solutions is computationally infeasible. We support our results with a simulation study on a set of revenue management problems, where the network structure is taken from an actual firm and adapted to an alliance network. Finally, we remark that the steps that we follow in this study can be extended to other resource sharing applications, where linear programming is one of the fundamental tools, and data privacy is a major concern. %such as distributed network utility maximization problems in wireless communication and power networks \citep{Karakoc20} \sitodo{Bu aşamada neden sadece tek bir makaleyi öne çıkarıyoruz anlamadım?}

\section{Review of Related Literature.}
\label{sec:lit}
Collaboration via forming alliances is common in many
industries. Therefore, the efficient capacity allocation among the
involved parties has long constituted an intriguing research
topic with applications in airlines \citep{wright2010dynamic, topaloglu12, Chun17}, logistics and maritime shipping \citep{Agarwal10, Zheng15, gansterer2018collaborative, Ding19}, and retail industries \citep{Guo18}. Previous literature on collaborative decision-making has primarily concentrated on the development of centralized models, which require information sharing among partners or with a central planner \citep{Boyd98, topaloglu12, Zheng15}. Recent studies have pointed out the impracticality of central planning due to the restrictions in information sharing. \cite{Belobaba13} and \cite{Wright14} have discussed the limitations in information sharing among alliance partners in airlines. Similarly, \cite{Albrecht15} have studied the collaboration in supply chains and pointed out that the current advanced supply chain planning systems assume complete information sharing between firms, and there is no mechanism to coordinate a system where the partners only share limited information. They have proposed a negotiation based scheme to coordinate collaborative parties in a decentralized supply chain environment. Due to limitations in information sharing, decentralized models have been studied for collaborative decision-making problems \citep{Sorolla20}. 

\cite{poundarikapuram04} have proposed a decentralized
decision-making framework based on the L-shaped method for a collaborative planning problem
in a supply chain. This framework separates the centralized problem into a master
problem that includes common variables for all parties and sub-problems with
private local objectives and variables. The authors have presented an iterative procedure, where the parties can solve their local problems privately and disclose limited information to solve the master problem at each iteration. \cite{topaloglu12} has studied a
centralized alliance problem in airline revenue management. By relaxing the shared constraints with dual variables, he has proposed a decomposition approach to find booking limits for each alliance partner as well as bid-prices. \cite{amaruchkul11} have addressed data privacy in air-cargo transportation. The carrier allocates bulk cargo capacity to the forwarder that sells this capacity to individual customers. The authors have studied the capacity contract between these two partners when the forwarder has private information on demand distribution, operating costs, and reservation profits. \cite{Chun17} have addressed the capacity
exchange problem in maritime transportation and proposed a two-stage framework to obtain the optimal resource allocation policy between alliance partners. In the first stage, optimal capacity exchange amount is determined so that the total alliance profit is maximized. Given the allocated capacities, each party decides on the reservation price to maximize its own local objective in the second stage. Recently, \cite{Lai2019} have studied the capacity allocation problem for a freight alliance by considering the data privacy in revenues and profit margins. They assume that the alliance partners jointly book freight capacity from the market according to the forecasted shipping
demand, and then share this capacity during the planning period. The authors have first studied the centralized capacity allocation problem assuming revenue information is public. Due to the difficulty in solving the centralized problem, they have proposed an iterative auction mechanism based on the primal-dual method to find the capacity allocation policy. In the designed auction mechanism, the alliance partners do not need to share their private data except the forecasted shipping demand.  

Although decomposition or decentralization approaches reduce the information exchange
among parties, they do not completely ensure data privacy. We review two main approaches in privacy preserving methods for
optimization and data analysis: cryptographic and non-cryptographic
approaches \citep{weeraddana2013per}. Cryptography-based techniques such as secure multiparty computation allow several parties to do computation jointly over their inputs while ensuring that they remain private to each party. There is
an extensive literature on cryptographic methods, yet computational
issues are always of concern. In fact, forming a completely secure
protocol for an optimization problem requires very high computational power
\citep{hong2018privacy}. \cite{li2006secure} have presented a secure
simplex algorithm for a setting, where the objective function and the
constraints are arbitrarily partitioned. \cite{toft2009solving} has followed the
same scheme with \citeauthor{li2006secure} and presented a protocol
for solving linear programs using a secure simplex algorithm. However,
\cite{dreier2011practical} have implemented the algorithm proposed by
\cite{toft2009solving} and pointed out that it is computationally
inefficient even for small-scale problems. 

Transformation is a non-cryptographic technique that involves
converting a given linear optimization problem into a new problem via
algebraic transformations, such that the solution of the new problem
is the same as that of the original problem
\citep{mangasarian2012privacy,wang2011secure}. This enables parties to
disguise private data effectively while preserving the quality of the solution. \cite{du2001study} and \cite{vaidya2009privacy} have used transformation method in linear programming models. However,
\cite{bednarz2012methods} has shown that this method is open to
information acquisition attacks; that is, the private coefficients in the model can be learned by others.
\cite{mangasarian2011privacy,mangasarian2012privacy} has proposed
transformation techniques for vertically and horizontally partitioned
linear programming models. \cite{li2013privacy} have
extended this approach by incorporating inequality constraints to
horizontally partitioned linear programming models. \cite{hong2014inference} have showed that the transformation method proposed by 
\cite{li2013privacy} is also open to attacks. 
There are few other transformation
approaches for privately solving collaborative linear programming
problems \citep{hong2018privacy,weeraddana2013per}. To the best of our
knowledge, all of these transformation approaches focus on privacy in
input data (private data) and primal optimal solution. How the dual solution is affected by the transformation applied to the primal model is unexplored.

\section{Data-Private Capacity Control.}
\label{sec:model}

Consider a collaborative network revenue management problem where multiple parties
cooperate to share several capacities of the network for their own
resource allocation systems. In addition to the shared capacities, each party controls its own private capacities. The aim of the parties is to decide on the optimal allocated capacities for the shared resources and to identify the bid-prices of the capacities available to them. In this setting, parties jointly build and solve the capacity sharing problem without disclosing any private information about their operations. Parties set
a partnership agreement at the beginning of the planning horizon. Therefore, a partner does not have any
access to the private data of any other partner in the cooperation. Depending on the industry, this private data may be revenues, available resources, operation routes, and demand information. 

Like other studies in the literature, we also assume that the parties
in an alliance cooperate truthfully \citep{Krajewska06, topaloglu12, hyndman2013aligning}. This assumption is known as semi-honest behaviour in computer science literature \citep{vaidya2009secure}. It implies that the involved parties do not
alter their data to get a better position in the collaboration. This
assumption is naturally satisfied for several important reasons: If
one of the parties strategically manipulate the collaboration and
secure most of the shared capacities, then there can be legal
consequences or simply loss-of-goodwill for their business. Firms must consider
the long-term prospects of future collaborations that can be jeopardized with an opportunistic
behaviour. Moreover,
as the optimal primal and dual solutions do not result from the correct network
information, they are of no use for controlling the shared
capacities. Such a harmful move from one of the parties can also cause
other parties to opt out of the partnership in the next round. Even
worse, the other parties may start altering their data, which also
eventually leads to total collapse of the collaboration. As a final
example, consider a holding company that owns all the parties in the
collaboration. Due to legal regulations, it may be impossible to share
data among the involved parties without transformation. In such a
setting, there is no incentive for any participant to manipulate the
collaboration at the expense of the others.

\subsection{Capacity Sharing Model.}

In this section, we first describe the general type of collaborative network revenue management problems that can be solved via our approach. We start with an illustration of the proposed capacity sharing setting. Figure \ref{fig:exnetwork} shows a simple network structure
for two parties. Each link between a pair of nodes corresponds to a shared or private capacity on the
network. The first party operates the capacities (1-3) and (3-4), whereas the second party operates the capacities (2-3) and (2-4). The second party also uses the capacity (3-4) shared by the first party. A sequence of links constitutes a path (\textit{e.g.} route or
itinerary). The set of paths listed in Figure \ref{fig:exnetwork}
shows a number of origin-destination combinations. Moreover, on each
path there could be multiple products with different revenues. An
example could be the set of different transshipment prices on a route
that involves several stops with different truck capacities on a
transportation network. \cite{birbil14} have also explored this
network structure and proposed a framework based on path
decomposition. They treat each path as a single resource problem for a
fixed capacity and solve an optimization problem over all possible
allocations of the capacities. We next reconsider the
path-based model of \cite{birbil14} and write it in a form that we can
analyze to propose a data-private capacity control approach.

\begin{figure}[ht]
 \centerline{
     \includegraphics[scale=0.35]{./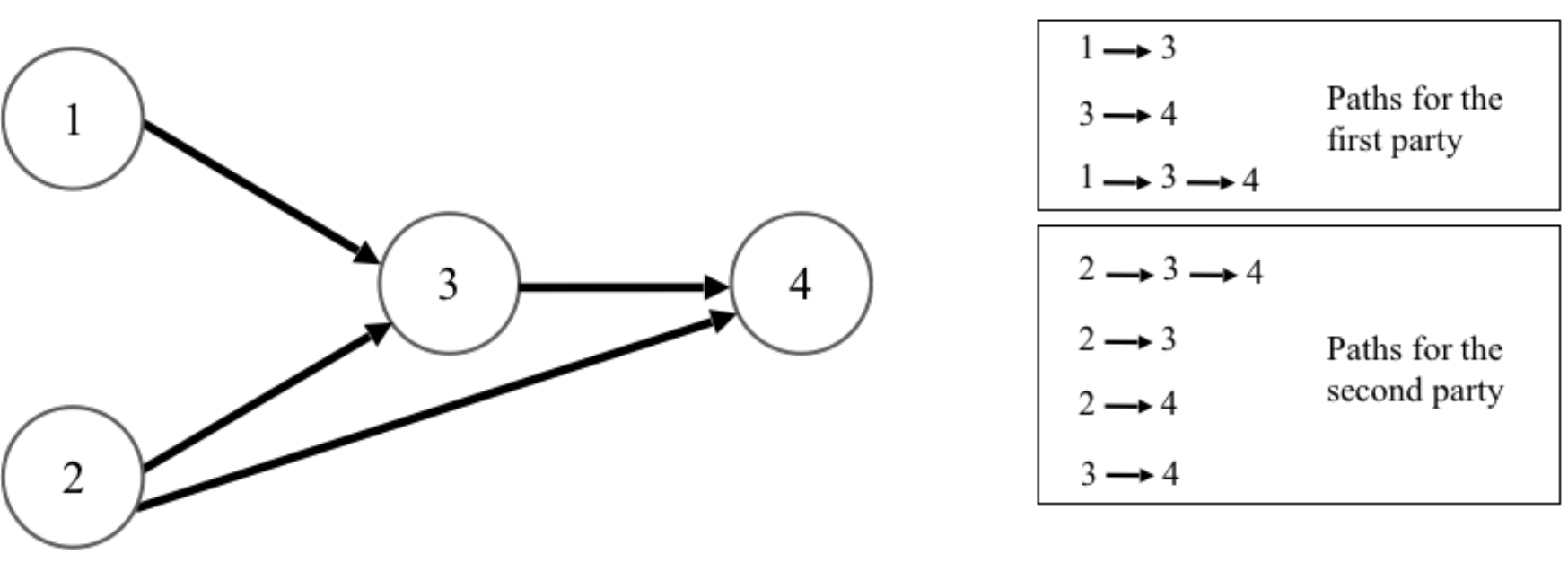}
   }
   \caption{An illustrative network structure for two parties. The
     capacity on path $1 \rightarrow 3$ is private for the first
     party, whereas the capacities on paths $2 \rightarrow 3$ and
     $2 \rightarrow 4$ are private for the second party. Two parties
     share the capacity on path $3 \rightarrow 4$.}
 \label{fig:exnetwork}
\end{figure}

Before we present our main capacity sharing model, let us give our
notation. We denote the set of parties by $\CK$ and the set of paths
controlled by party $k \in \CK$ is denoted by $\CS_k$. Let $\CJ$ be
the set of $m$ capacities shared by at least two parties. In addition, each party
$k \in \CK$ has its own $m_k$ private capacities given by the set $\CJ_k$. If the collection of paths is $\CS$ and $x_s$ is the allocated capacity to path $s \in \CS$, then the generic model becomes
\begin{equation}
\label{eq:model1}
\begin{aligned}
  \maximize & \sum_{s \in \CS}  \phi_{s}(x_{s}), \\
  \subto & \sum_{s\in\CS} a_{js}x_{s} \leq c_{j},& j \in \CJ, \\
            & \sum_{s \in \CS_k} a_{js}x_{s} \leq c_{j},& \qquad j \in \CJ_k, k\in \CK,\\
            & x_{s} \geq 0, & s \in \CS,
\end{aligned}
\end{equation}
where $a_{js} = 1$, if path $s$ uses one unit from capacity $j$; 0,
otherwise.  The shared capacities are denoted by $c_j$, $j\in\CJ$, and
$c_j$, $j\in\CJ_k$ stand for the private capacities of party
$k \in \CK$. The first set of constraints ensures that the capacity
allocation decision for paths do not violate the shared
capacities. The second set of constraints guarantees that the capacity
allocation decisions to paths for party $k \in \CK$ do not exceed the
private capacity limits for that party. The function
$ \phi_{s}(x_{s})$ for a given $x_s$ is itself evaluated by solving an
optimization problem that yields the allocation decision of $x_s$
capacities to different classes with the objective of maximizing
revenue. For instance, $\phi_s(x_s)$ may correspond to a stochastic
dynamic programming or a deterministic programming model constructed
for capacity allocation problem for each path $s$. We assume that
$x_s \mapsto \phi_s(x_s)$, $s \in \CS$ are discrete concave
functions. \cite{birbil14} have discussed that many well-known single
dimension capacity control models proposed in the revenue management
literature satisfy this assumption. Any of these models can be used to
construct the objective function. We refer to \cite{birbil14} for an
elaborate discussion on different dynamic and static network revenue
management problems that can be considered within this generic
structure.

As the objective function is concave and separable, we can replace it
by a piece-wise linear concave function and reformulate the problem as
a linear program. \cite{dantzig} has proposed an approach which
represents the concave objective function as an indefinite integral
and approximates it by a sum over fixed intervals. Following this approach, 
we can reformulate the model (\ref{eq:model1}) as follows:
\begin{align}
  Z = ~ & \maximize && \hspace*{-20mm}\sum_{k \in \CK} \vr_k\tr \vx_k,     \label{eqn:org_model_obj} \\
       	& \subto    && \hspace*{-20mm}\sum_{k \in \CK} \mA_k\vx_k \leq \vc, &&             && \hspp (\valpha) \label{eqn:org_model_c} \\
       	&           && \hspace*{-20mm}\mB_k\vx_k \leq  \vc_k,               && k \in \CK,  && \hspp (\valpha_k) \label{eqn:org_model_ck}\\[2mm]
       	&           && \hspace*{-20mm}\zv \leq \vx_k \leq \ov, && k \in \CK, && \label{eqn:org_model_nonneg}
\end{align}
where $\ov$ and $\zv$ stand for the vector of ones and the vector of
zeros, respectively. The details of this model construction is given in Appendix \ref{sec:app1}. In this model, the columns designated by subscript
$k \in \CK$ show all products owned by party $k$ and the vector
$\vr_k \in \RR^{n_k}$ denotes the corresponding expected revenues for
the same party. The $m \times n_k$ incidence matrix $\mA_k$ shows
whether a product of party $k$ uses the shared capacities. Likewise,
the $m_k \times n_k$ matrix $\mB_k$ consists of columns incident to
the private capacities. The vectors $\valpha \in \RR^m$ and
$\valpha_k \in \RR^{m_k}$, $k \in \CK$ given in parentheses are the
dual variables (bid-prices) associated with the common and the
individual capacity constraints, respectively. After solving this
problem, each party obtains its own optimal allocations and
bid-prices. They also receive the optimal common bid-price vector
corresponding to the shared capacities. These bid-prices can be used
by the parties to implement their decision-making policies. Therefore, the decision vector
$\vx_k$ and individual dual variables $\valpha_k$ are private to party $k \in \CK$.
 In our subsequent discussion,
the optimal values of capacity allocations and bid-prices for each
party and the optimal bid-price vector for shared capacities are
denoted by $\vx_k^*$, $\valpha_k^*$ and $\valpha^*$. Before discussing 
the data-private mathematical model, let us define formally what constitutes as the private dataset
for each party.

\begin{definition}
	\label{def:dataset}
	In multi-party capacity sharing problem	\eqref{eqn:org_model_obj}-\eqref{eqn:org_model_nonneg}, the private dataset
	for party $k \in \CK$ consists of the matrices $\mA_k$,
	$\mB_k$, and the vectors $\vc_k$, $\vr_k$.
\end{definition}

One may question why multiple parties would prefer solving the network
problem collectively? Instead, the shared capacity, $\vc$ may be
partitioned among the parties and each party can solve its problem independently. That is, each party $k \in \CK$ receives
its share of the capacity denoted by $\vs_k$ such that
$\vc = \sum_{k \in \CK} \vs_k$. Then, the same party $k$ can solve the
following problem without sharing any private data:
\begin{align}
  Z_k = ~ 	& \maximize && \hspace*{-25mm}\vr_k\tr \vx_k,    &&&\label{eqn:ind_model_obj}\\
           	& \subto    && \hspace*{-25mm}\mA_k\vx_k \leq \vs_k,  &&&\label{eqn:ind_c_dp} \\
           	&           && \hspace*{-25mm}\mB_k\vx_k \leq  \vc_k,  &&& \label{eqn:ind_ck_dp}\\
           	&           && \hspace*{-25mm}\zv \leq \vx_k \leq \ov.   &&& \label{eqn:ind_model_nonneg}
\end{align}
However, this approach has three drawbacks: First, it is not clear how
to determine the optimal partitioning of the shared capacity among the
parties without having the complete information of the network
problem. As a result of this suboptimal partitioning, the capacities
allocated to a party might be left unused, even though those
capacities could have been filled up by the other parties, if they had
been shared. These adverse effects of the defragmentation of network
capacities have also been observed in the literature \citep{Curry90, kunnumkal10}. This suboptimal partitioning leads us to the second
drawback. The pre-allocation of the common capacities yields less
total expected revenue than that of the collective model
\eqref{eqn:org_model_obj}-\eqref{eqn:org_model_nonneg}. In other
words, irrespective of the way in which the common capacity is shared,
we have $\sum_{k \in \CK} Z_k \leq Z$. This result simply follows from
the fact that the collection of feasible solutions to each
\eqref{eqn:ind_model_obj}-\eqref{eqn:ind_model_nonneg} is also a
feasible solution to
\eqref{eqn:org_model_obj}--\eqref{eqn:org_model_nonneg}. Third, the
primal and dual variables obtained from the individual models depend on the
partitioning of the common capacity and lack information about the
entire network. Overall, it is more beneficial for all parties to
collaborate and solve the collective model
\eqref{eqn:org_model_obj}--\eqref{eqn:org_model_nonneg}.

\subsection{Data-Private Mathematical Model.}
\label{subsec:DataPrivate}

Even though they may have agreed to collaborate, the major concern for
the parties is the privacy of the input data (see Definition \ref{def:dataset}) and their sensitive
decisions when solving the joint problem
\eqref{eqn:org_model_obj}-\eqref{eqn:org_model_nonneg}. In our problem setup, the parties keep their revenue and capacity vectors as well as their product matrices private. Although the parties sign up for sharing some capacities on the network, they do not share any information about their individual capacities. Thus, only the shared resource capacities are not private for the involved parties. In the subsequent part of this section, we
present the steps for the parties to randomly transform their private
input and output data in order to ensure data-privacy while collectively solving the network problem. Then, with this transformed data, the overall
private model is constructed and made available to all parties. We
conclude this section with our key result, which shows that parties
can still recover their optimal allocations and bid-prices after the
proposed private model is solved by each party. In our following discussion, we use the term \textit{masked problem} to refer to the resulting problem after applying random transformations to the input data.
  
First, we start with concealing the private output; that is, the
individual optimal capacity allotments ($\vx_k^*$) and the individual
bid-prices ($\valpha_k^*$). To this end, we first ask each party
$k \in \CK$ to generate its own private pair of random vectors,
$\veta_k \in \RR^{n_k}$ and $\vxi_k \in \RR^{m_k}$ to transform the primal and dual solutions. Then, we use the auxiliary
variables $\vz_k$ and $\vv_k$ for $k \in \CK$ to construct the
following mathematical model:
\begin{align}
  \maximize & \sum_{k \in \CK}(\vr_k + \mB_k\tr\vxi_k)\tr\vz_k + \sum_{k \in \CK} \vxi_k\tr\vv_k  \label{eqn:org_obj_dp} & \\
  \subto    & \sum_{k \in \CK}\mA_k\vz_k \leq \vc + \sum_{k \in \CK}\mA_k\veta_k, && & & \hspp(\vbeta) \label{eqn:org_c_dp}\\
            & \mB_k\vz_k + \vv_k = \vc_k + \mB_k\veta_k, && k \in \CK, & & \hspp(\vbeta_k) \label{eqn:org_ck_dp}\\
            & \vz_k \leq \ov + \veta_k, && k \in \CK, & &  \label{eqn:org_ub_dp} \\
            & \vz_k \geq \veta_k, && k \in \CK, & & \label{eqn:org_lb_dp} \\
            & \vv_k \geq \zv, && k \in \CK, \label{eqn:org_lastcon_dp}
\end{align}
where the vectors in parentheses are again the dual vectors associated
with the corresponding constraints. The constraints
(\ref{eqn:org_c_dp}) and (\ref{eqn:org_ck_dp}) correspond to the
capacity constraints (\ref{eqn:org_model_c}) and
(\ref{eqn:org_model_ck}) in path-based formulation
\eqref{eqn:org_model_obj}--\eqref{eqn:org_model_nonneg},
respectively. Note that, we use the transformations
$\vz_k = \vx_k + \veta_k$ and $\vv_k = \vc_k - \mB_k\vx_k$ for
$k \in \CK$ to obtain the new model. Since $\vx_k$ is increased by
$\veta_k$, the right hand side of constraints (\ref{eqn:org_model_c})
and (\ref{eqn:org_model_ck}) are increased by
$\sum_{k \in \CK}\mA_k\veta_k$ and $\mB_k\veta_k$ in constraints
\eqref{eqn:org_c_dp} and \eqref{eqn:org_ck_dp}, respectively. Given
that the auxiliary variable $\vv_k$ corresponds to the slack of
constraints (\ref{eqn:org_model_c}), we can rewrite the constraint
\eqref{eqn:org_ck_dp} as an equality. The new auxiliary variable
$\vv_k$ and its cost coefficient $\vxi_k$ are introduced to make sure
that the dual optimal solution is shifted with a random vector. The
following lemma formally shows that the optimal allocations and the
dual variables for each party are indeed perturbed with private random
noise vectors after solving this problem. As long as each party
$k \in \CK$ does not share its random vectors $\veta_k$ and $\vxi_k$
with the other parties, the individual optimal capacity allotments and
the individual bid-prices remains private. The proof of the lemma is
given in the Appendix \ref{sec:app2}.

\begin{lemma}
  \label{lem:output}
  If we denote the primal optimal solution of
  \eqref{eqn:org_obj_dp}-\eqref{eqn:org_lastcon_dp} by
  $(\vz^*_k, \vv^*_k)_{k \in \CK}$ and the dual optimal variables
  associated with the capacity constrains by
  $(\vbeta^*, \vbeta^*_k)_{k \in \CK}$, then we have
  \[
    \begin{array}{cl}
    \vz_k^* = \vx_k^* + \veta_k, & k \in \CK, \\
      \vbeta^* = \valpha^*, &\\
      \vbeta_k^* = \valpha^*_k + \vxi_k, & k \in \CK.
    \end{array}
  \]
\end{lemma}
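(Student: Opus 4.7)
My plan is to reduce the transformed problem \eqref{eqn:org_obj_dp}--\eqref{eqn:org_lastcon_dp} to the original problem \eqref{eqn:org_model_obj}--\eqref{eqn:org_model_nonneg} by an invertible affine change of variables, and then match primal and dual optima via the KKT conditions. Concretely, I would set $\vx_k := \vz_k - \veta_k$ and keep $\vv_k$ as is. Substituting this into \eqref{eqn:org_c_dp}--\eqref{eqn:org_lb_dp}, the $\veta_k$ terms cancel on both sides, so the four blocks of constraints become exactly $\sum_k \mA_k\vx_k\le\vc$, $\mB_k\vx_k+\vv_k=\vc_k$, $\zv\le\vx_k\le\ov$, and $\vv_k\ge\zv$. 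These are, by construction, the constraints of the original model \eqref{eqn:org_model_obj}--\eqref{eqn:org_model_nonneg} after writing the private-capacity inequality $\mB_k\vx_k\le\vc_k$ in slack form with $\vv_k$ as the nonnegative slack.

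Next, I would expand the objective \eqref{eqn:org_obj_dp} under the same substitution:
\begin{equation*}
\sum_{k\in\CK}(\vr_k+\mB_k\tr\vxi_k)\tr(\vx_k+\veta_k)+\sum_{k\in\CK}\vxi_k\tr\vv_k
=\sum_{k\in\CK}\vr_k\tr\vx_k+\sum_{k\in\CK}\vxi_k\tr(\mB_k\vx_k+\vv_k)+C_1,
\end{equation*}
where $C_1=\sum_k(\vr_k+\mB_k\tr\vxi_k)\tr\veta_k$ depends only on the problem data and the masking vectors. Using the equality constraint $\mB_k\vx_k+\vv_k=\vc_k$, the middle term equals the constant $\sum_k\vxi_k\tr\vc_k$, so maximizing \eqref{eqn:org_obj_dp} is equivalent to maximizing $\sum_k\vr_k\tr\vx_k$ over the original feasible region. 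Therefore the optimal $\vx_k^\star$ of \eqref{eqn:org_model_obj}--\eqref{eqn:org_model_nonneg} corresponds to $\vz_k^\star=\vx_k^\star+\veta_k$ in the masked problem, proving the first identity.

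For the dual identities, I would write the KKT stationarity conditions of the slack-form reduction. Let $\vbeta\ge\zv$, $\vbeta_k$ free, $\vgamma_k\ge\zv$ be the multipliers associated with \eqref{eqn:org_c_dp}, \eqref{eqn:org_ck_dp} and the constraint $\vv_k\ge\zv$, respectively (plus the bound multipliers on $\vz_k$). Stationarity in $\vv_k$ gives $\vxi_k-\vbeta_k+\vgamma_k=\zv$, i.e.\ $\vbeta_k=\vxi_k+\vgamma_k$, while stationarity in $\vx_k$ (after substituting $\vbeta_k=\vxi_k+\vgamma_k$) collapses to the original condition $\vr_k=\mA_k\tr\vbeta+\mB_k\tr\vgamma_k+\vmu_k-\vnu_k$. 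This is exactly the stationarity KKT of the original problem \eqref{eqn:org_model_obj}--\eqref{eqn:org_model_nonneg} with $\vbeta$ playing the role of $\valpha$ and $\vgamma_k$ playing the role of $\valpha_k$. Since complementary slackness on the bound/capacity constraints is preserved by the same substitution (the active sets are unchanged), I can set $\vbeta^\star=\valpha^\star$ and $\vgamma_k^\star=\valpha_k^\star$, whence $\vbeta_k^\star=\valpha_k^\star+\vxi_k$.

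The main obstacle is the dual bookkeeping: one must check that writing the private-capacity constraint in equality form with the non-standard cost $\vxi_k$ on the slack correctly shifts the equality-constraint multiplier by $\vxi_k$ without altering the multiplier $\vbeta$ on the shared capacities or disturbing complementary slackness on the box constraints. Once this is laid out carefully, strong LP duality then promotes the KKT-matched pair to a genuine optimal primal--dual solution of the masked problem, completing the claim.
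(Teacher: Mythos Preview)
Your proof is correct. The paper takes a somewhat different route: rather than first establishing that the primal objectives differ by a constant on the feasible set and then separately matching KKT conditions for the dual, it writes out the dual LPs of both \eqref{eqn:org_model_obj}--\eqref{eqn:org_model_nonneg} and \eqref{eqn:org_obj_dp}--\eqref{eqn:org_lastcon_dp} explicitly, maps the optimal primal \emph{and} dual solutions of the masked problem to candidate solutions of the original problem via the same affine shifts you use ($\vx_k^*=\vz_k^*-\veta_k$, $\valpha^*=\vbeta^*$, $\valpha_k^*=\vbeta_k^*-\vxi_k$, $\vlambda_k^*=\vnu_k^*$), checks primal and dual feasibility, and then appeals to a single strong-duality equality---substituting into the primal--dual objective identity of the masked problem and simplifying to the corresponding identity for the original---to certify optimality of both at once. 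Your argument is more structural: the stationarity condition in $\vv_k$ immediately reveals $\vbeta_k=\vxi_k+\vgamma_k$ with $\vgamma_k\ge\zv$ the multiplier on $\vv_k\ge\zv$, which explains \emph{why} the $\vxi_k$ shift appears and why the nonnegative $\vgamma_k$ is the correct surrogate for $\valpha_k$; you also avoid writing out two full dual programs. The paper's argument, by contrast, packages the optimality check into one algebraic identity and does not need to track the lower-bound multipliers $\vtheta_k$ individually, at the cost of making the mechanism behind the dual shift less transparent.
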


 We note that, in order to solve problem
  \eqref{eqn:org_obj_dp}-\eqref{eqn:org_lastcon_dp}, each party
  $k \in \CK$ still needs to reveal its pair of random vectors,
  $\veta_k$ and $\vxi_k$ so that the objective function and the bound
  constraints can be constructed. Consequently, the optimal
  allocations and the dual variables of each party are no longer
  private. Thus, our next step is to conceal the random vectors by
  using a linear transformation. That is, we set
  $\vv_k = \mE_k\tr \vw_k$ for $k \in \CK$, where $\mE_k$ is a
  $t_k \times m_k$ random matrix with $t_k \geq m_k$. Likewise, we can
  also set $\vz_k = \mD_k\tr \vu_k$ for $k \in \CK$, where $\mD_k$ is
  a $s_k \times n_k$ random matrix with $s_k \geq n_k$. This is, in
  fact, the transformation proposed by
  \cite{mangasarian2011privacy}. We note that we can simply form
    matrices $\mathbf{D}_k$ and $\mathbf{E}_k$ with real or rational
    random values. Then, the resulting matrices are almost-surely full
    rank \citep{Feng07}. We then obtain
\begin{align}
  \maximize & \sum_{k \in \CK}(\vr_k + \mB_k\tr\vxi_k)\tr\mD_k\tr\vu_k + \sum_{k \in \CK} \vxi_k\tr\mE_k\tr\vw_k & \label{eqn:org_obj_mask1}\\
  \subto    & \sum_{k \in \CK}\mA_k\mD_k\tr\vu_k \leq \vc + \sum_{k \in \CK}\mA_k\veta_k, & \label{eqn:org_mask1_c} \\
            & \mB_k\mD_k\tr\vu_k + \mE_k\tr\vw_k = \vc_k + \mB_k\veta_k, & k \in \CK, \label{eqn:org_mask1_ck}\\
            & \mD_k\tr\vu_k \leq \ov + \veta_k, & k \in \CK, \label{eqn:org_mask1_bndk1} \\
            &  \mD_k\tr\vu_k \geq \veta_k, & k \in \CK, \label{eqn:org_mask1_bndk2} \\
            & \mE_k\tr\vw_k \geq \zv, & k \in \CK. \label{eqn:org_mask1_nonk}
\end{align}

Nonetheless, this transformation is still not enough to conceal the
data or the random vectors because the parties have to explicitly
share the random matrices $\mE_k$ due to constraints
\eqref{eqn:org_mask1_ck} and \eqref{eqn:org_mask1_nonk}. Likewise, the
random matrices $\mD_k$ as well as the random vectors $\veta_k$ need
to be revealed because of the bound constraints
\eqref{eqn:org_mask1_bndk1}-\eqref{eqn:org_mask1_bndk2}. In fact,
\cite{mangasarian2011privacy} deals only with linear programming
models \textit{without} bound constraints and mentions that there is
``a difficulty associated with possibly including non-negativity
constraints.'' \cite{li2013privacy} include inequality constraints, and
resolve this privacy issue by allowing each party to generate a
positive diagonal random matrix for their slack variables. Their approach has been shown to be open to attacks \citep{hong2014inference}. We, on the
other hand, propose to sample from the set of $M$-matrices for which
the positive diagonal matrices constitute a subset. This choice is
valid because if $\mS$ is an $M$-matrix, then $\mS \vx \geq \zv$
implies $\vx \geq \zv$ \citep{Horn91}. This leads to the following
model:
\begin{align}
  \maximize & \sum_{k \in \CK}(\vr_k + \mB_k\tr\vxi_k)\tr\mD_k\tr\vu_k + \sum_{k \in \CK} \vxi_k\tr\mE_k\tr\vw_k & \label{eqn:org_obj_mask2}\\
  \subto    & \sum_{k \in \CK}\mA_k\mD_k\tr\vu_k \leq \vc + \sum_{k \in \CK}\mA_k\veta_k, & \label{eqn:org_mask2_c} \\
            & \mF_k\mB_k\mD_k\tr\vu_k + \mF_k\mE_k\tr\vw_k = \mF_k(\vc_k + \mB_k\veta_k), & k \in \CK, \label{eqn:org_mask2_ck}\\
            & \mG_k\mD_k\tr\vu_k \leq \mG_k(\ov + \veta_k), & k \in \CK, \label{eqn:org_mask2_bndk1} \\
            & \mH_k\mD_k\tr\vu_k \geq \mH_k\veta_k, & k \in \CK, \label{eqn:org_mask2_bndk2}\\
            & \mL_k\mE_k\tr\vw_k \geq \zv, & k \in \CK, \label{eqn:org_mask2_nonk}
\end{align}
where the $m_k \times m_k$ matrices $\mF_k$ and $\mL_k$ as well as the
$n_k \times n_k$ matrices $\mG_k$ and $\mH_k$ are all $M$-matrices. In
order to conceal the random vector $\eta_k$ and the random matrices
$\mD_k$ and $\mE_k$ for $k\in \mathcal{K}$, we multiply constraints
(\ref{eqn:org_mask1_ck})-(\ref{eqn:org_mask1_nonk}) by $M$-matrices
$\mF_k$, $\mG_k$, $\mH_k$ and $\mL_k$, respectively and obtain
constraints (\ref{eqn:org_mask2_ck}) -
(\ref{eqn:org_mask2_nonk}). Model (\ref{eqn:org_obj_mask2}) -
(\ref{eqn:org_mask2_nonk}) ensures privacy of individual input and
output data of each party while parties solve their joint capacity
control problem. Next, we give the formal definition of $M$-matrix
that is used to conceal data in our data-private model
(\ref{eqn:org_obj_mask2}) - (\ref{eqn:org_mask2_nonk}).
  \begin{definition}[$M$-matrix
    \citep{poole1974survey}]
    \label{defn:mMatrix}
    An $\ell\times \ell$ matrix $\mM$ that can be expressed in the
    form $\mM = s\mI - \mN$, where $\mN=(n_{ij})$ with
    $n_{ij}\geq 0,\ i,j \in 1,...,\ell$, and $s > \rho(\mN)$ is called
    an \textit{$M$-matrix} where
    $\rho(\mN)=\max \{|\lambda|: \det(\lambda \mI - \mN) = 0\}$.
  \end{definition}
  This definition also gives a procedure to obtain a random
  $M$-matrix: First sample a random nonnegative $\mN$ matrix and
  select a random $s > \rho(\mN)$. Then, $s\mI - \mN$ becomes an
  $M$-matrix. This simple procedure clearly shows that it is possible
  to produce infinitely many $M$-matrices. We will make use of this
  observation, when we discuss the security of our transformed problem
  in the next section.

To simplify our notation, we further define for $k \in \CK$ the
following
\begin{equation}
  \label{eqn:notation}
\begin{array}{cccc}
  \bar{\vr}_k = \mD_k(\vr_k + \mB_k\tr\vxi_k), & \bar{\vxi}_k = \mE_k\vxi_k, & \bar{\mA}_k = \mA_k\mD_k\tr, & \bar{\vc}=\vc + \sum_{k \in \CK}\mA_k\veta_k = \vc + \sum_{k \in \CK} \tilde{\veta}_k\\
  \bar{\mB}_k = \mF_k\mB_k\mD_k\tr, & \bar{\mF}_k=\mF_k\mE_k\tr & \bar{\vc}_k = \mF_k(\vc_k + \mB_k\veta_k), & \bar{\mG}_k = \mG_k\mD_k\tr \\
  \bar{\ov}_k=\mG_k(\ov + \veta_k) &  \bar{\mH}_k=\mH_k\mD_k\tr, & \bar{\veta}_k = \mH_k\veta_k, & \bar{\mL}_k= \mL_k\mE_k\tr,
\end{array}
\end{equation}
and rewrite model \eqref{eqn:org_obj_mask2}-\eqref{eqn:org_mask2_nonk} as

\begin{align}
  && \bar{Z} = ~ & \maximize &&\hspace*{-10mm} \sum_{k \in \CK}\bar{\vr}_k\tr\vu_k + \sum_{k \in \CK}\bar{\vxi}_k\tr\vw_k & \label{eqn:pri_obj}\\
  &&              & \subto    &&\hspace*{-10mm} \sum_{k \in \CK}\bar{\mA}_k\vu_k \leq \bar{\vc}, && && && \hspp (\vgamma) \\
  &&              &          &&\hspace*{-10mm} \bar{\mB}_k\vu_k + \bar{\mF}_k\vw_k = \bar{\vc}_k, && k \in \CK, && && \hspp (\vgamma_k)\\
  &&              &          &&\hspace*{-10mm} \bar{\mG}_k\vu_k \leq \bar{\ov}_k, && k \in \CK, \\
  &&              &          &&\hspace*{-10mm} \bar{\mH}_k\vu_k \geq \bar{\veta}_k, && k \in \CK, \\
  &&              &          &&\hspace*{-10mm} \bar{\mL}_k\vw_k \geq \zv, && k \in \CK, \label{eqn:pri_nonneg}
\end{align}
where $(\vgamma, \vgamma_k)_{k \in \CK}$ are the dual variables. The
following theorem shows that after the random transformations the
\textit{exact} primal and dual solutions of the original problem can easily be
recovered. The proof of this theorem is given in the appendix.

\begin{theorem}
  \label{thm:equiv}
  Let $(\vu_k^*, \vw_k^*)_{k \in \CK}$ and
  $(\vgamma^*, \vgamma_k^*)_{k \in \CK}$ be the primal and dual
  optimal solutions of
  \eqref{eqn:pri_obj}-\eqref{eqn:pri_nonneg}. Using again the primal
  and dual optimal solutions, $(\vx_k^*)_{k \in \CK}$ and
  $(\valpha^*, \valpha^*_k)_{k \in \CK}$ of the original problem
  \eqref{eqn:org_model_obj}--\eqref{eqn:org_model_nonneg}, we obtain
  \[
    \begin{array}{cl}
    Z = \bar{Z} - \sum_{k \in \CK} \vr_k\tr \veta_k - \sum_{k \in \CK} (\vc_k + \mB_k\veta_k)\tr\vxi_k, &\\
      \vx_k^* = \mD_k\tr \vu_k^* - \veta_k, & k \in \CK, \\
      \valpha^* = \vgamma^*, & \\
      \valpha_k^* = \mF_k\tr\vgamma^*_k - \vxi_k, & k \in \CK.
    \end{array}
  \]
\end{theorem}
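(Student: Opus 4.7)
The plan is to follow the template of Lemma \ref{lem:output}: propose candidate primal and dual solutions for the fully transformed problem \eqref{eqn:pri_obj}--\eqref{eqn:pri_nonneg} that realize the relations claimed in the theorem, verify their feasibility, and conclude via strong duality of linear programming. Lemma \ref{lem:output} already bridges \eqref{eqn:org_model_obj}--\eqref{eqn:org_model_nonneg} and the intermediate problem \eqref{eqn:org_obj_dp}--\eqref{eqn:org_lastcon_dp}, so the only new work concerns how the random transformations $\mD_k,\mE_k$ and the $M$-matrices $\mF_k,\mG_k,\mH_k,\mL_k$ propagate from the intermediate problem into \eqref{eqn:pri_obj}--\eqref{eqn:pri_nonneg}.

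For the primal I would start from the intermediate optimum $\vz_k^* = \vx_k^* + \veta_k$ and $\vv_k^* = \vc_k - \mB_k\vx_k^*$ supplied by Lemma \ref{lem:output}. Because $\mD_k$ and $\mE_k$ are sampled as real random matrices and hence are almost surely of full column rank, their transposes are surjective and one can pick $(\vu_k^*,\vw_k^*)$ with $\mD_k\tr\vu_k^* = \vz_k^*$ and $\mE_k\tr\vw_k^* = \vv_k^*$. For the dual I would set $\vgamma^* = \valpha^*$ and $\vgamma_k^* = \mF_k\ntr(\valpha_k^* + \vxi_k)$, together with $\vnu_k^* = \mG_k\ntr\vlambda_k^*$, $\vtheta_k^* = \mH_k\ntr\vs_k^*$ where $\vs_k^* := \mA_k\tr\valpha^* + \mB_k\tr\valpha_k^* + \vlambda_k^* - \vr_k \geq \zv$ is the slack in the original dual constraint, and $\vmu_k^* = \mL_k\ntr\valpha_k^*$. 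Each of these dual candidates is entry-wise non-negative because every nonsingular $M$-matrix has a non-negative inverse.

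Dual feasibility then follows by writing out the dual of \eqref{eqn:pri_obj}--\eqref{eqn:pri_nonneg}, factoring $\mD_k$ out of the free-variable constraint in $\vu_k$ and $\mE_k$ out of that in $\vw_k$, and cancelling by full column rank; what remains are exactly the original dual constraints, satisfied by construction. Primal feasibility of the capacity inequality and of the $\mF_k$-scaled equality follows directly from the intermediate feasibility of $(\vz_k^*,\vv_k^*)$, while the three $M$-matrix-premultiplied inequalities must be handled using the non-negativity of $\vx_k^*,\ \ov-\vx_k^*$ and $\vc_k-\mB_k\vx_k^*$ together with the structural properties of $\mG_k,\mH_k,\mL_k$. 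Finally, evaluating the masked objective at the primal candidate produces $\bar{Z} = \sum_k(\vr_k+\mB_k\tr\vxi_k)\tr\vz_k^* + \sum_k\vxi_k\tr\vv_k^*$; after substituting $\vz_k^*$ and $\vv_k^*$ and cancelling the $\mB_k\vx_k^*$ cross-terms, this simplifies to $\bar{Z} = Z + \sum_k\vr_k\tr\veta_k + \sum_k(\vc_k+\mB_k\veta_k)\tr\vxi_k$, which rearranges to the stated formula, and strong duality certifies optimality of the candidates, yielding the remaining three recovery identities. I expect the main obstacle to be the primal feasibility of the $M$-matrix-premultiplied inequalities, since the $M$-matrix property directly supplies only $\mS\vy\geq\zv\Rightarrow\vy\geq\zv$; obtaining the reverse implication needed at the proposed primal candidate has to be handled with care and may require invoking the complementary-slackness structure tying the proposed dual variables $\vnu_k^*,\vtheta_k^*,\vmu_k^*$ back to the primal.
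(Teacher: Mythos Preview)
Your route differs from the paper's: the paper does not redo the feasibility-plus-strong-duality argument of Lemma \ref{lem:output} at the level of \eqref{eqn:pri_obj}--\eqref{eqn:pri_nonneg}. Instead it observes that \eqref{eqn:pri_obj}--\eqref{eqn:pri_nonneg} is obtained from \eqref{eqn:org_obj_dp}--\eqref{eqn:org_lastcon_dp} by the substitutions $\vz_k=\mD_k\tr\vu_k$, $\vv_k=\mE_k\tr\vw_k$ and the induced dual change $\vbeta_k=\mF_k\tr\vgamma_k$, cites \citet[Proposition 1]{mangasarian2011privacy} for equality of optimal values under the $\mD_k,\mE_k$ transformation, and then simply reads off the recovery formulas from Lemma \ref{lem:output}. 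The $M$-matrix premultiplications are disposed of in one sentence: ``both sides of the constraints \eqref{eqn:org_ub_dp}--\eqref{eqn:org_lastcon_dp} are multiplied by $M$-matrices, and hence, feasibility is not affected.'' Your explicit dual construction is fine---inverse-nonnegativity of $M$-matrices does give $\mG_k\ntr\vlambda_k^*\geq\zv$, $\mH_k\ntr\vs_k^*\geq\zv$, $\mL_k\ntr\valpha_k^*\geq\zv$, and the equality constraints reduce correctly after factoring out the full-column-rank $\mD_k,\mE_k$.

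The obstacle you isolate is genuine, and complementary slackness will not close it. At your primal candidate the $M$-matrix inequalities read $\mG_k(\ov-\vx_k^*)\geq\zv$, $\mH_k\vx_k^*\geq\zv$, $\mL_k(\vc_k-\mB_k\vx_k^*)\geq\zv$. The $M$-matrix property (equivalently $\mS^{-1}\geq\zv$ entrywise) supplies only $\mS\vy\geq\zv\Rightarrow\vy\geq\zv$; the reverse implication fails because a nonsingular $M$-matrix may have strictly negative off-diagonals. Concretely, with $\mG_k=\bigl(\begin{smallmatrix}2&-1\\-1&2\end{smallmatrix}\bigr)$ and $\ov-\vx_k^*=(0,1)\tr$ one gets $\mG_k(\ov-\vx_k^*)=(-1,2)\tr\not\geq\zv$, so your primal candidate can be infeasible for \eqref{eqn:pri_obj}--\eqref{eqn:pri_nonneg}. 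Complementary slackness presupposes primal feasibility; once a primal constraint is violated, no choice of dual multipliers certifies optimality, so the ``tying back'' you sketch cannot repair the argument. Note that the paper's own proof does not actually justify the step either---its assertion that feasibility is unaffected by $M$-matrix premultiplication is exactly the reverse implication that fails in the example above---so you should not expect to extract a missing ingredient from it.
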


With this main theorem, we conclude that the parties can safely obtain
their own eaxact solutions, since the set of random matrices designated with
subscript $k$ is known only to the individual party $k \in \CK$. It is
important to note that the parties can generate their primal solutions
by using the random matrices, yet the dual solutions of the original
problem are exactly the same as the transformed problem
($\vgamma^* = \valpha^*$).

Algorithm \ref{alg:fw} presents our transformation-based protocol and shows how one party ($\hat{k}$ in the algorithm) can apply the data-private capacity control. In Step
\ref{alg:s1}, the party prepares the input data. This data is
transformed in Step \ref{alg:s2}, and shared with the other
parties. Now, the input for the overall private model is available to
everyone (Step \ref{alg:s3}). Each party then solves the private
problem and obtains the optimal solutions in Step \ref{alg:s4}. Then,
party $\hat{k}$ recovers its optimal dual variable vectors in Step
\ref{alg:s5}.  As a result, using Lemma \ref{lem:output} and Theorem
\ref{thm:equiv}, we show that the optimal primal and dual solutions of
the original model can be obtained safely from the optimal primal and
dual solutions of the transformed model. Hence, we conclude that the
correctness of the original problem is preserved.

\begin{algorithm}
	\caption{Data-Private Capacity Control for Party ${\hat{k}} \in \CK$} \label{alg:fw}
	\begin{algorithmic}[1]
		\State Compile private individual input \label{alg:s1}
		\[
		\vxi_{\hat{k}}, \veta_{\hat{k}},\mD_{\hat{k}}, \mE_{\hat{k}}, \mF_{\hat{k}}, \mG_{\hat{k}}, \mH_{\hat{k}}, \mL_{\hat{k}}.
		\]
		\State Transform individual input using \eqref{eqn:notation} and share \label{alg:s2}
		\[
		\bar{\vr}_{\hat{k}}, \bar{\vxi}_{\hat{k}}, \bar{\mA}_{\hat{k}}, \bar{\mB}_{\hat{k}},
		\bar{\mF}_{\hat{k}}, \bar{\vc}_{\hat{k}}, \bar{\mG}_{\hat{k}}, \bar{\ov}_{\hat{k}}, \bar{\mH}_{\hat{k}},
		\bar{\veta}_{\hat{k}}, \bar{\mL}_{\hat{k}}, \tilde{\veta}_{\hat{k}}.
		\]
		\State Store all transformed data \label{alg:s3}
		\[
		(\bar{\vr}_{k}, \bar{\vxi}_{k}, \bar{\mA}_{k}, \bar{\mB}_{k},
		\bar{\mF}_{k}, \bar{\vc}_{k}, \bar{\mG}_{k}, \bar{\ov}_{k}, \bar{\mH}_{k},
		\bar{\veta}_{k}, \bar{\mL}_{k}, \tilde{\veta}_{k})_{k \in \CK}.
		\]
		\State Solve \eqref{eqn:pri_obj}-\eqref{eqn:pri_nonneg}
		with \label{alg:s4} $\bar{\vc}=\vc + \sum_{k \in \CK}\mA_k\veta_k$
		and the stored data. Obtain transformed optimal solution
		\[
		(\vu_k^*, \vw_k^*, \vgamma^*, \vgamma_k^*)_{k \in \CK}.
		\]
		\State Recover private individual output using the transformed solution
		and the private input in Step \ref{alg:s1}:  \label{alg:s5}
		\[
		\vx_{\hat{k}}^* = \mD_{\hat{k}}\tr \vu_{\hat{k}}^* -
		\veta_{\hat{k}}, ~\valpha^* = \vgamma^*, ~\valpha_{\hat{k}}^* =
		\mF_{\hat{k}}\tr\vgamma^*_{\hat{k}} - \vxi_{\hat{k}}.
		\]
	\end{algorithmic}
\end{algorithm}

\subsection{Security.}
\label{subsec:security}

We next discuss the security of our data private model
\eqref{eqn:pri_obj}-\eqref{eqn:pri_nonneg} in the presence of attacks. With transformation-based approaches, there is an overarching
trade-off between efficiency and security
\citep{goldreich2009foundations}. Actually,
\citet{laud2013possibility} show that it is impossible to achieve an
information theoretical security with the transformation techniques of
multiplication, scaling, permutation and shifting; a point that is
also noted by \cite{dreier2011practical} in their work on linear
programming. In transformation-based methods, the concern is how much information may leak to other parties during the transformation. Indeed, this concern is also raised by \cite{du2002practical} as to quantify the security achieved in each transformation-based protocol, so that these protocols can be compared in terms of the level of security they can achieve. To be able to attain this, the notion of information leakage, also known as the vulnerability of the system, is introduced by \cite{braun2009quantitative} in which it is defined as ``the amount of information learnt by the adversary by observing the output of the protocol.'' In a more recent study, \cite{dreier2011practical} use information leakage concept to measure how much information about the private information is revealed to an adversary. They quantify this leakage when the parameters of a linear program are masked with random matrices. In their analysis, the components of all data and random matrices are assumed to be nonnegative integers, and the leakage results depend on the largest components of the random matrices. Even though our input matrices are binary, we do not impose upper bounds or integrality requirements on the components of the random vectors or matrices (not necessarily square) used in our transformations. It is crucial to point out that the authors only use \textit{positive monomial matrices} (permutation matrices with nonnegative pivot elements) to deal with the inequalities, and state that these matrices cause vulnerability in security. Unlike these simple
permutation matrices, we sample from the much larger set of
$M$-matrices. In fact, as Definition \ref{defn:mMatrix} and the subsequent paragraph show, this set is uncountable.

Recall that the matrices $\mA_k$ and $\mB_k$ constitute the products
for each firm $k \in \CK$. In certain applications, one of the parties
may be able to guess the products and the capacities of the other
parties. For instance, in airline revenue management, these products
correspond to different fare class itineraries which may be collected
by web-scraping. As Theorem \ref{thm:equiv} and \eqref{eqn:notation}
show, the security of our transformations relies mainly on private
random matrices $\mD_k$, $\mF_k$, $\veta_k$ and $\vxi_k$. We show in
the next lemma that even all private data (see Definition \ref{def:dataset}) of a firm are
perfectly guessed, a brute-force approach to obtain the private random
matrices is computationally infeasible. The proof of this lemma is given
in the appendix.

\begin{lemma}
  \label{lem:security}
  Suppose for $k\in \CK$ that $1 \leq m < n_k \leq s_k$,
  $1 < m_k \leq t_k$, and both $\mA_k$ and $\mB_k$ have full rank.
  Even if all private data of party $k\in \CK$ are known (see Definition \ref{def:dataset}), then
  finding any one of $\mD_k$, $\mF_k$, $\veta_k$ or $\vxi_k$ requires
  obtaining a particular solution to a system of linear equations with
  infinitely many solutions.
\end{lemma}

The condition in Lemma \ref{lem:security} implies that each
party should participate in this collaboration with multiple
individual capacities and multiple products. This is a reasonable
assumption, since it is easier for the other parties to guess the
actual values for very small datasets. We note that even if the
dataset of a party is small, the same party can still enlarge its
dataset by adding redundant constraints or dummy products.

\section{Simulation Study.}
\label{sec:simstudy}

We devote this section to our simulation study for discussing
different aspects of our proposed data-private model. In particular,
we investigate the impact of capacity sharing and evaluate the
computational performance of the data-private model. We next explain
our simulation setup in detail and then present our numerical
results.

\subsection{Setup.}
\label{sec:exp_setup}

We design our experiments by using an airline network structure
obtained from an actual firm. These data include flight legs with
corresponding capacities, flight itineraries and origin-destination
(OD) paths. Since the network data belongs to a single airline, it does
not include any alliance information. In order to construct an
alliance network, we randomly allocate OD-paths in each network to
obtain artificially generated airline partners. The partners set a
block space partnership agreement to share capacities on some of the
flights at the beginning of the planning horizon. Although the
real-time flight information such as marketed flight itineraries and
associated prices can be partially available through online travel
agencies during the sale season, the complete flight information
including forecasted demand, prices and flight capacities are not
available when the codeshare agreements are set at the beginning of
the sale season.

We simulate the arrival of reservation requests over a planning
horizon of length $T$. We assume that the booking requests for OD path
$s \in \mathcal{S}$ arrive according to a homogeneous Poisson process
with rate $\lambda_s$. Given that a booking request arrives for
OD-path $s$ at time period $t$, it is for product $i$ with probability
$p_{is}(t)$. The way we generate these arrival probabilities is quite
similar to the one given by \cite{birbil14}. The simulation process is
defined as follows. We first generate the arrival times of booking
requests for all OD-paths over the planning horizon $T$. By using the
arrival probabilities for products in each OD path, we find the
product of the requests and apply the corresponding booking
policies. To change the tightness of the flight capacities, we use a
load factor parameter ($\rho$). The average arrival rate, $\mu_j$ for
flight $j \in \mathcal{J}$ depends on the value of the load
factor. This relation can be expressed as
$\mu_j = \rho \frac{c_j}{TN_j}$, where $N_j$ is the number of OD-paths
using flight leg $j$. Then, the arrival rate $\lambda_s$ for OD-path
$s$ is generated as follows
$\lambda_s = \frac{\sum_{j \in J_s} \mu_j}{J_s}$, where $J_s$ is the
number of flight legs used by OD-path $s$.

Our experimental design is based on various factors. These are the
number of alliance partners ($K$), the number of OD-paths ($N$) and
the load factor ($\rho$). In simulation experiments, we design three
alliance partnerships with different numbers of partners
$K \in \{2, 4, 6\}$. We assume that all alliance partners have a
similar market share in terms of number of OD-paths in the alliance
network. We test three networks with sizes $N \in \{100, 200,
400\}$. We extract these networks from the overall network data, which
include 119, 215 and 368 flight legs, and 869, 1,762 and 3,567
products, respectively. Since we randomly divide OD paths among the
fictitious alliance partners, the number of shared flights can change
depending on the allocated flights. Therefore, Table \ref{table:data2}
presents the average number of shared flights in each subnetwork. The
last parameter set comes from the load factor $\rho \in \{1.2, 1.6\}$
corresponding to medium and high loads, respectively. The
computational results are reported over 100 simulation runs. We take
the reservation period length as $T = 1,000$.

\begin{table}[ht]
	\begin{center}
          \caption{The average number of shared flights in each network}
		\label{table:data2}
		\begin{tabular}{|c|ccc|}
			\hline
			\multicolumn{1}{|c|}{\textbf{Number of Parties}} & \multicolumn{3}{c|}{\textbf{Network Size} $\mathbf{(N)}$}  \\
			$\mathbf{(K)}$& \textbf{100} & \textbf{200}   &  \textbf{400}  \\
			\hline
			\textbf{2} &  9 & 32 & 81  \\
			\textbf{4} & 16 & 42 & 109 \\
			\textbf{6} & 18 & 43 & 124 \\
			\hline
		\end{tabular}
	\end{center}
\end{table}

\subsection{Results.}
In this section, we conduct simulation experiments to evaluate the
effects of collaborative capacity sharing and provide a sensitivity
analysis with respect to various parameters. In particular, we investigate centralized coordination with complete information sharing, coordination with data privacy and individual control strategies. These strategies are formally introduced as follows:

\paragraph{Collaborative Capacity Planning (CP).} This strategy
assumes that alliance partners act collaboratively and the booking
decisions for shared capacities are controlled through an integrated
planning system which requires complete information sharing. \cite{topaloglu12} describes this system as
``centralized planning'' where the booking decisions are made by
considering the overall alliance benefit. CP solves the model
(\ref{eqn:org_model_obj})-(\ref{eqn:org_model_nonneg}) to compute the
optimal values of the dual variables ($\valpha \in \RR^m$ and
$\valpha_k \in \RR^{m_k}$, $k \in \CK$) associated with the capacity
constraints. When a request for a product arrives, the summation of
the optimal dual variables corresponding to the used flight legs
becomes the bid-price for accepting or rejecting the request. That is,
assuming a product request using path $s$ arrives for party $k$, we
accept this request if the fare of the product is greater than or
equal to
$\sum_{j\in \mathcal{J}} a_{js}\alpha_j +\sum_{j\in \mathcal{J}_k}
b_{js} \alpha_{jk}$. To consider the effects of reoptimization, we
divide the planning horizon into five equal segments and resolve the
model (\ref{eqn:org_model_obj})-(\ref{eqn:org_model_nonneg}) at the
beginning of each segment with the updated capacities. Since CP
coordinates the booking decisions for the whole alliance, this
strategy requires access to all flight information of the partners
(the capacities on all flight legs and the expected demands for all
products) which is quite unlikely to occur in practice
\citep{topaloglu12}.

\paragraph{Coordinated Capacity Sharing (CCS).} In this strategy, the
parties come together and solve the data-private model
(\ref{eqn:pri_obj})-(\ref{eqn:pri_nonneg}) by transforming their
private information to obtain the optimal values of the transformed
dual variables and the capacity allocations (see Algorithm
\ref{alg:fw}). After partners receive the transformed solution, each
of them converts the transformed values to the original ones as shown
in Theorem \ref{thm:equiv}. During the booking horizon, each partner
makes its own booking control decisions by using the optimal dual
variables and the allocated leg capacities. Letting,
$(\valpha^*, \valpha^*_k)_{k \in \CK}$ be the recovered dual variables
obtained by the dual optimal solution of model
\eqref{eqn:pri_obj}-\eqref{eqn:pri_nonneg}, we accept the arriving
path $s$ request if the fare of the product is greater than or equal
to
$\sum_{j\in \mathcal{J}} a_{js}\alpha^*_j +\sum_{j\in \mathcal{J}_k}
b_{js} \alpha^*_{jk}$ and there is enough allocated leg capacity for
the flights covered by path $s$.  Similar to CP, we divide the booking
horizon into five segments and resolve problem
(\ref{eqn:pri_obj})-(\ref{eqn:pri_nonneg}) at the beginning of each
segment. Unlike CP, CCS does not require alliance partners to share
any private information regarding flights.

\paragraph{Individual Control (IC).} This strategy solves problem
(\ref{eqn:ind_model_obj})-(\ref{eqn:ind_model_nonneg}) for each
partner. For shared flight-legs, partner-based capacity allocations
are calculated with respect to the expected demands. In particular,
letting $d_{jk}$ be the demand for partner $k$ in shared flight leg
$j$, the allocated capacity for partner $k$ is calculated as
$\frac{d_{jk}}{\sum_{k \in K_j}d_{jk}}c_j$, where $K_j$ is the set of
partners using leg $j$. In this strategy, each partner makes its own
booking control decisions by using the optimal bid-prices associated
with capacity constraints in problem
(\ref{eqn:ind_model_obj})-(\ref{eqn:ind_model_nonneg}). Similar to
previous strategies, we divide the planning horizon into five segments
and revise the bid-prices at the beginning of each segment. IC
strategy requires alliance partners to share their demand information
in order to allocate the capacities of the shared flights.

Recall that the objective function value in path-based formulation can
also be obtained by solving different single-capacity, static and
dynamic programming models. Our approach here is applicable in all
those cases. In our numerical experiments, we assume that the three
strategies listed above use a deterministic linear programming (DLP)
model to compute the booking control policies. The reason behind this
choice is two-fold: First, DLP models are frequently used in the
literature \citep{poundarikapuram04, Albrecht15}. Second, bid-price control
with a DLP model is a competitive strategy when compared against other
static and dynamic network models \citep{Talluri05}.

Figure \ref{fig:revenue} shows our simulation results in terms of revenues (objective functions) for three
alliance networks with two, four and six partners, respectively. In
these figures, we present the relative differences with respect to the
CP strategy, since it always performs better than the other two
strategies. CP oversees the whole alliance network and makes
accept-reject decisions for arriving reservation requests for all
partners. On the other hand, airlines individually make their booking
control decisions in strategies CCS and IC without sharing any
information over the planning horizon. In Figure \ref{fig:revenue},
the dashed line passing through 100 corresponds to CP, and bar charts
are used to show the relative difference for strategies CCS and
IC.

When we compare the respective performances, we observe that the
average revenues obtained by CCS are very close to those obtained by
CP, especially for the networks with 100 and 200 OD-paths. The average
performance gaps between CP and CCS are only 0.20\%, 0.35\% and 0.85\%
for the problems with 100, 200 and 400 OD-paths, respectively. As
Figure \ref{fig:revenue} illustrates, the performance gaps between CP
and CCS slightly decrease when the load factor is high. We conjecture
that CP and CCS make same booking decisions most of the time since
both of these strategies solve the same centralized model to obtain
their optimal booking policies The only difference is that CCS
allocates shared flight capacities to partners; hence, each airline is
restricted by that limit while making booking control decisions. CP
strategy pools the capacities of the shared legs and does not consider
the individual booking limits. When the arrival intensity is high, CCS
can compensate the revenue loss due to these restrictive booking
limits. We have validated that the relative differences between the
total expected revenues obtained by CCS and CP are statistically
significant at 95\% level in 18 test scenarios. For the relative
differences between the total expected revenues achieved by CCS and
CP, we have failed to reject the null hypothesis in only one of the
test instances. Thus, CCS can deliver similar results with the ideal
case where each party shares its information.

Comparing IC with CP and CCS in Figure \ref{fig:revenue}, we observe
that IC obtains lower expected revenues in all cases. We notice that,
as the number of OD-pairs (network size) and the alliance partners
increases, it performance deteriorates. The expected revenues obtained
with IC can lag on average 7.35\% behind those obtained with
CCS. This striking performance gap between strategies CCS and IC is
due to the management of the shared capacities. While CCS solves the
data-private model (\ref{eqn:pri_obj})-(\ref{eqn:pri_nonneg}) to
obtain booking control variables by considering the whole alliance
network, IC allocates the capacities of the shared flight legs by only
considering the expected demand information of each partner. This
demonstrates the importance of considering overall network information
while making capacity allocation decisions.

\begin{figure}[ht]
 \centerline{
     \includegraphics[scale=0.6]{./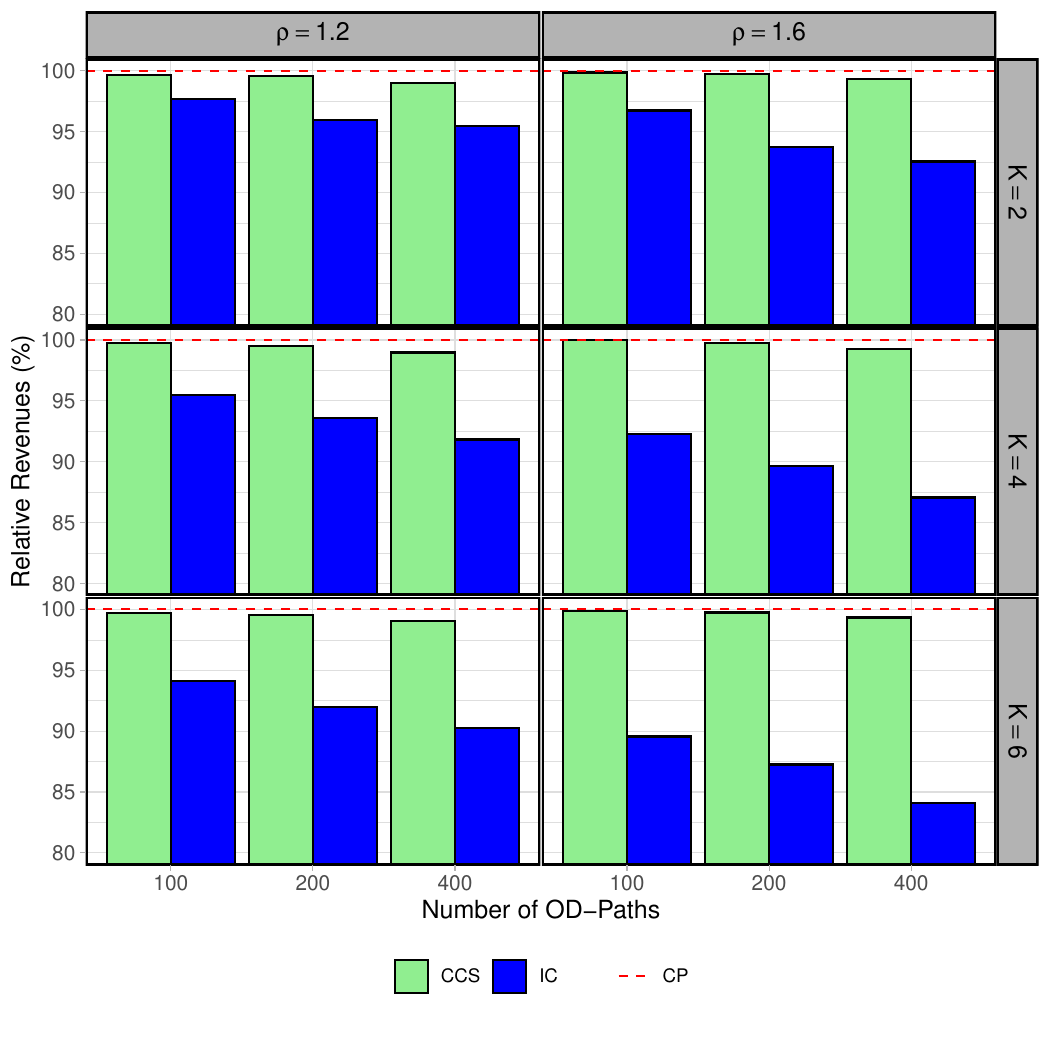}
   }
   \caption{Relative average revenues with respect to CP.}
 \label{fig:revenue}
\end{figure}

\subsection{Computational Efficiency.}
\label{subsec:sparsity}

In the last step, we evaluate the computational efficiency of the
data-private model (\ref{eqn:pri_obj})-(\ref{eqn:pri_nonneg}).
When we consider the structure of the proposed data-private model, we observe that the
matrices in the original model (\ref{eqn:org_model_obj})-(\ref{eqn:org_model_nonneg}) lose their sparse structure after the
reformulation. Take for instance the matrix $\mA_k$ and its transformed
counterpart $\bar{\mA}_k$. An incidence matrix $\mA_k$ is sparse
whereas $\bar{\mA}_k$ is quite dense. This loss of sparsity structure
in the overall problem should be expected to cause an increase in the
computation time. Indeed, we have observed that whenever the matrices
$\bar{\mA}_k$ and $\bar{\mB}_k$ are obtained by straightforward
randomization, then the solution time of the data-private model is
considerably longer than the time to solve the original problem (see
Section \ref{sec:simstudy} for our actual computation times). Figures
\ref{fig:original} and \ref{fig:dense} show the sparsity structure
before and after direct random transformation, respectively.

\begin{figure}
	\centering     %%% not \center
	\subfigure[Original $\mathbf{A}_k$ and $\mathbf{B}_k$ matrix]{
		\label{fig:original}\includegraphics[width=.3\linewidth]{./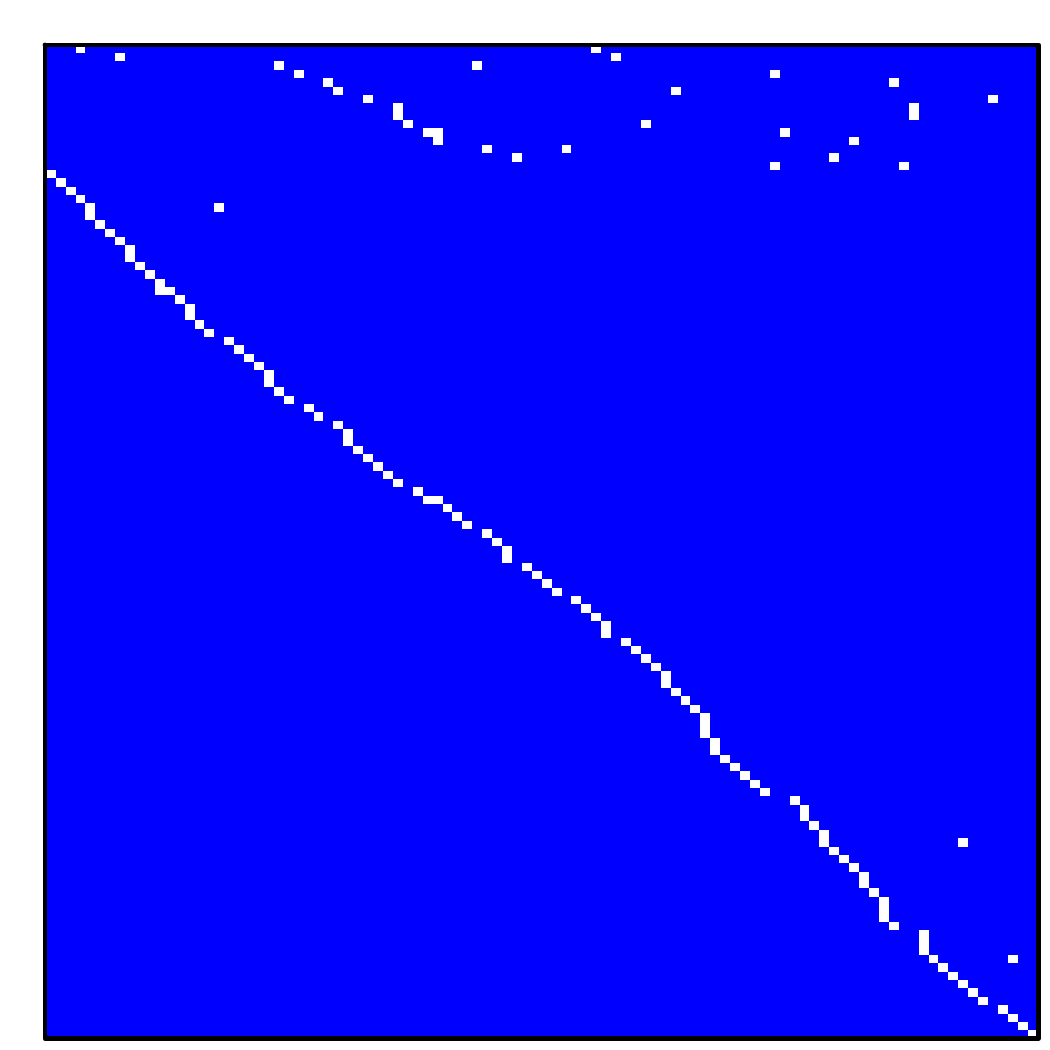}
	}
	\subfigure[Dense $\mathbf{\bar{A}}_k$ and $\mathbf{\bar{B}}_k$ matrix]{
		\label{fig:dense}\includegraphics[width=.3\linewidth]{./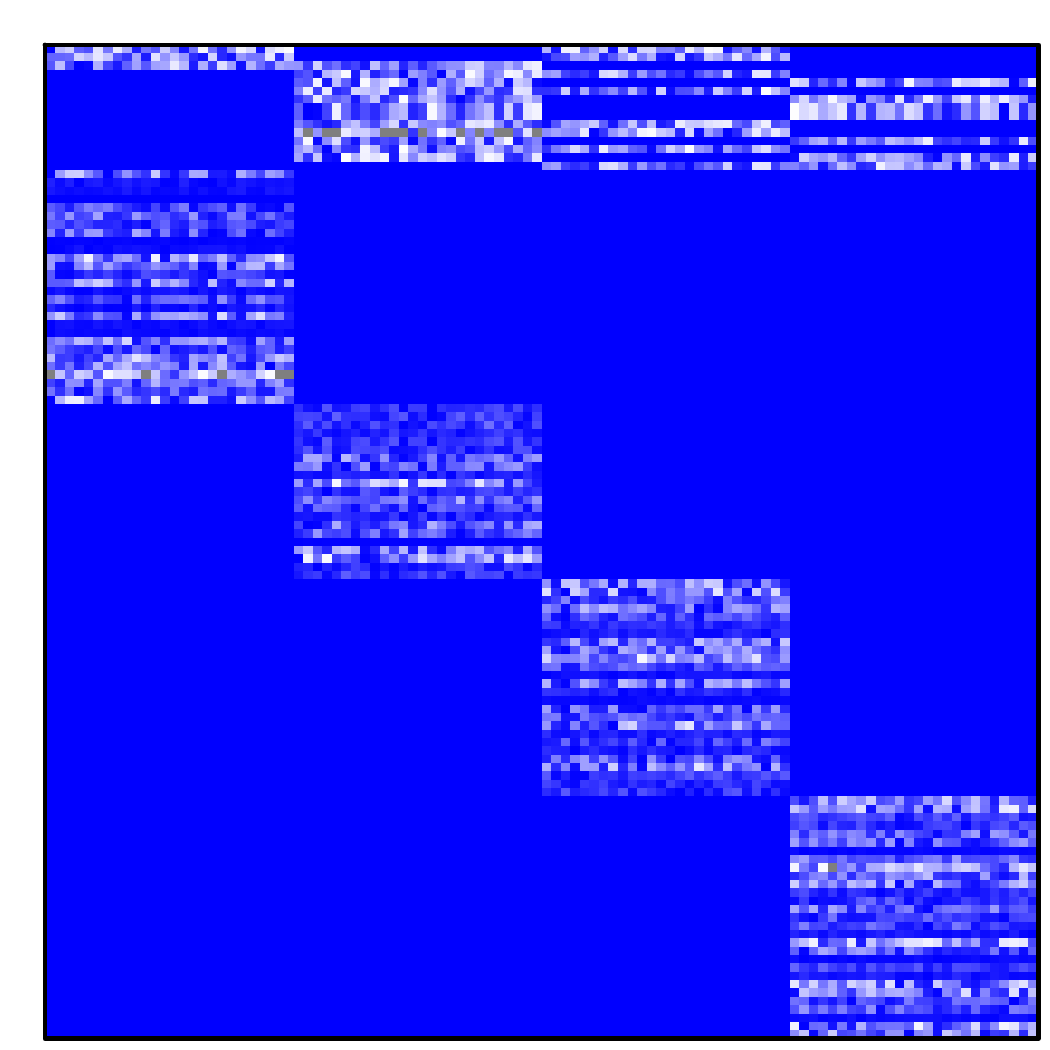}
	}
	\subfigure[Sparse $\mathbf{\bar{A}}_k$ and $\mathbf{\bar{B}}_k$ matrix]{
		\label{fig:sparse}\includegraphics[width=.3\linewidth]{./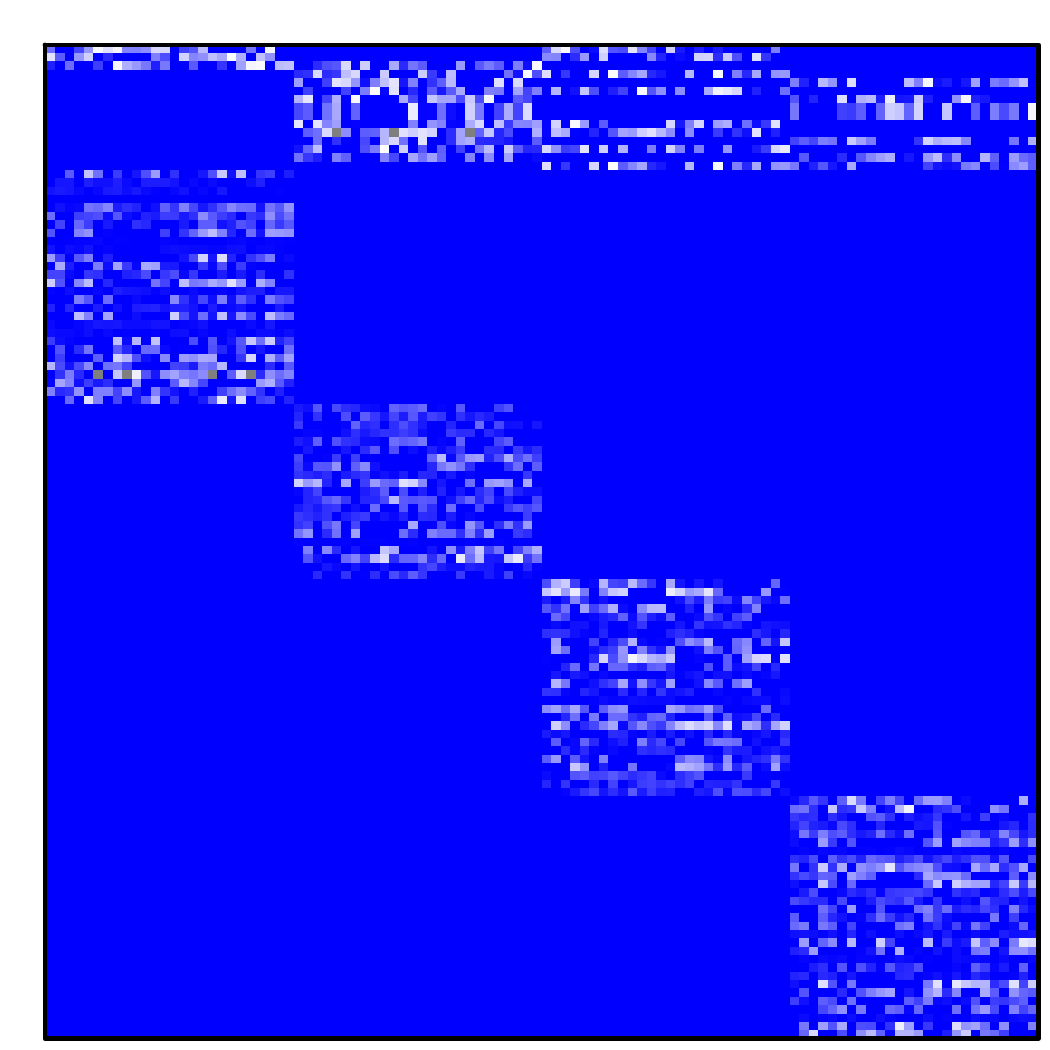}
	}
	% \addtocounter{subfigure}{-1}
	% \subfigure{
	% 	\includegraphics[width=.1\linewidth]{./figures/legend.png}
	% }
	\caption{The sparsity structure of matrices for an example
		problem from our computational study. The values in each
		cell of the matrix is between zero (dark blue) and one
		(bright white). The darker the cell, the closer the value to zero.}
	\label{fig:origvsmaskedsparsity}
\end{figure}

In order to circumvent this loss of sparsity, we try to randomize the
matrices in a structured manner so that we can obtain transformed
matrices that are as sparse as possible. To this end, we aim at
filling in the nonzero entries of the random matrix $\mD_k$ in such a
way that the multiplication of its components with the components of
$\mA_k$ and $\mB_k$ yields as many zeros as possible. This observation
leads to the following mathematical programming model:
\begin{align}
	\minimize &  \ov_m\tr (\mA_k \mU) \ov_{s_k} +  \ov_{m_k}\tr (\mB_k
	\mU) \ov_{s_k} + \ov_{n_k}\tr \mU \ov_{s_k}
	\label{aux:obj} \\
	\subto & \mU\ov_{s_k} \geq s_k \ov_{n_k}, \label{aux:c1} \\
	& \ov_{n_k}\mU \geq s_k \ov_{s_k}, \label{aux:c2} \\
	& \mU \mbox{ is a binary matrix}, \label{aux:nonneg}
\end{align}
where the subscript $\bullet$ in $\ov_\bullet$ shows the dimension of
the vector of ones. The first two terms in \eqref{aux:obj} are added
to obtain as many zeros as possible after multiplying $\mA_k$ and
$\mB_k$ with the binary matrix $\mU$. The last term of \eqref{aux:obj}
makes sure that the solution is filled with zeros instead of ones as
long as the first two terms are not affected. The constraints
\eqref{aux:c1}-\eqref{aux:c2} guarantee that we have $s_k$ many ones
in each column and row of $\mU$. This is a network flow problem
satisfying the total unimodularity property. Therefore, it can be
solved very efficiently by a standard network simplex
algorithm. Moreover, the resulting optimal spanning tree solution
$\mU^*$ has full rank \citep{wright2000minimum}. Then, the last step
is to randomize this binary matrix to obtain the desired
matrix. Formally, $\mD_k\tr = \mU^* \odot \mR$, where $\odot$ stands
for the Hadamard product and $\mR$ is an $n_k \times s_k$ random
matrix. When contrasted against Figure \ref{fig:dense}, Figure
\ref{fig:sparse} shows how obtaining the matrix $\mD_k$ by solving
\eqref{aux:obj}-\eqref{aux:nonneg} changes the sparsity structure of
the data-private model.

To understand the effect of transformation, we report the computation
times for the original model
(\ref{eqn:org_model_obj})-(\ref{eqn:org_model_nonneg}), the
data-private model (\ref{eqn:pri_obj})-(\ref{eqn:pri_nonneg}) and the sparsity induced model. The
data-private model results are first given with straightforward
randomization, which ends with full matrices. Then, we solve
\eqref{aux:obj}-\eqref{aux:nonneg} to obtain sparse matrices.  We
evaluate the computation times for all network sizes with four
parties. Figure \ref{fig:runtimeAvg} presents the average
    computation times on a semi-logarithmic plot; that is, the values
    on the vertical axis are scaled by taking their logarithms. The
legend shows the original model (CP), the masked model with
straightforward randomization (CCS - Dense) and the masked model with
sparsity inducing transformations (CCS - Sparse). Our numerical results confirm that
this loss causes a significant increase in the computation time. The
data-private model with dense matrices takes by far the largest
computation time compared to other models. As the network size
increases, the solution time of the data-private model also
increases. Taking sparsity into consideration for the data-private
model does indeed pay off, as the computation time with the sparsity
inducing transformations reduces the computation times considerably.

\begin{figure}[h]
	\centerline{
		\includegraphics[scale=0.6]{./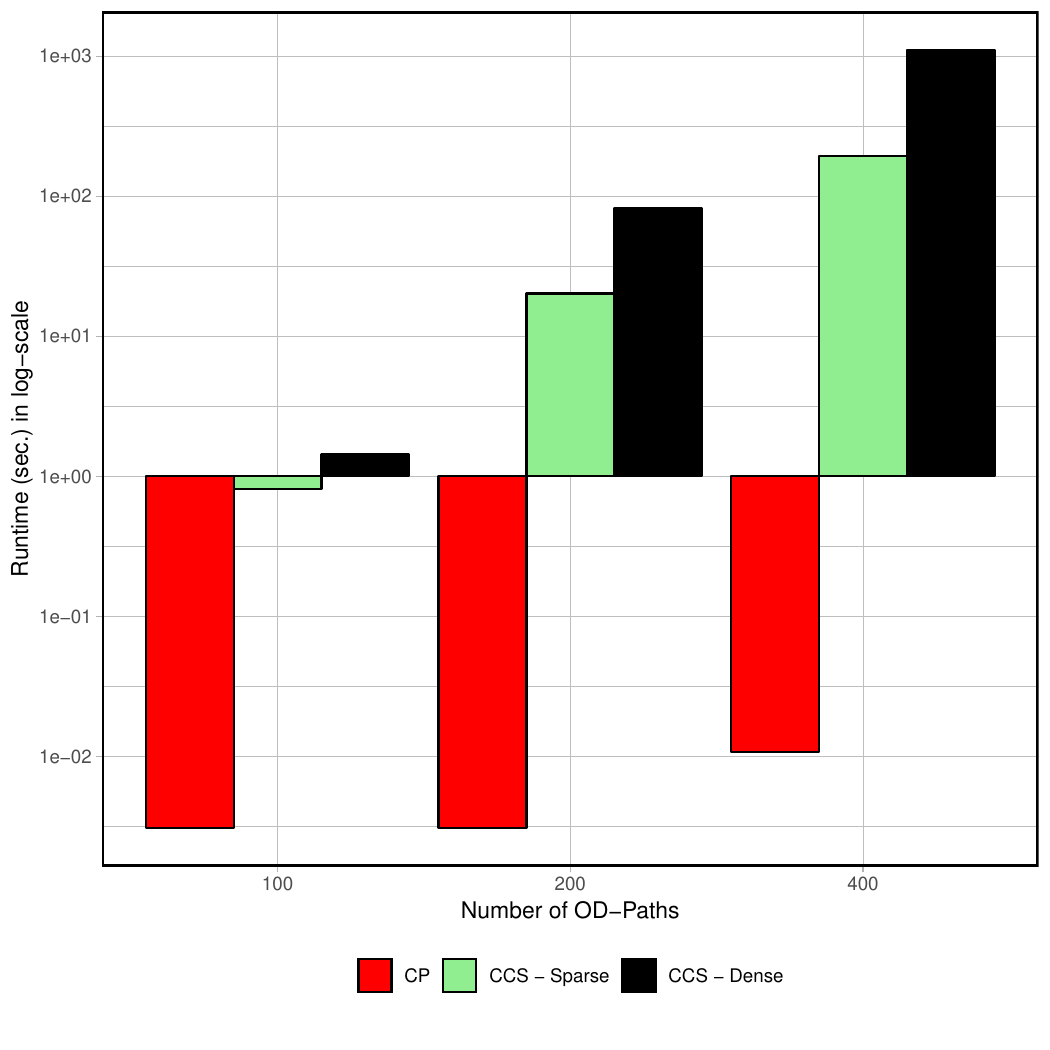}
    }
	\caption{Average computation time ($\rho = 1.2, K = 4$).}
	\label{fig:runtimeAvg}
\end{figure}

At this point, we should emphasize that the random matrices obtained
after solving the mathematical programming model
\eqref{aux:obj}-\eqref{aux:nonneg} may not be secure in the sense of
Section \ref{subsec:security}. Thus, the gain from maintaining
sparsity may come at the cost of a security breach. This happens
because we do not have a control on the optimal solution of the model,
and hence, it is not easy to quantify the potential leakage
\citep{hong2018privacy}. We leave this discussion for a future work.

\section{Conclusion.}
\label{sec:conc}

We have presented a mathematical model which considers data privacy in
collaborative resource management problem when multiple parties
share some of the capacities of the network. The proposed approach is
based on applying matrix transformations to collaborative network
problem. We have shown that
the original primal and dual optimal solutions can be derived from the
proposed data-private mathematical model. We have also discussed the security of the input data after solving the transformed problem.

We have conducted a simulation study on a network structure of an
airline. Our results have illustrated the benefits of the proposed
data-private capacity control. We have considered the setting with
and without a collaboration, and shown that, with collaboration the revenues
for the parties are significantly higher. Therefore, these results
offer an economic motivation for the parties to form an
alliance. Nevertheless, the privacy comes at a cost. Unlike the sparse
structure of the original problem, the data-private model has a dense
structure. This loss of sparsity causes a considerable increase in
computation times. To overcome this problem, we have provided an
approach based on solving a network flow model. We have demonstrated
how this approach positively affects the computational effort. We have
also cautioned that our approach for maintaining the sparse structure
may come at a security leakage cost. An important point that we have
left for future study.

Even though we have conducted a simulation study on an airline
network, our approach is not limited to airline problems. The proposed
approach can be used in different network problems from collaborative supply chains to power networks. For instance, in the logistics sector, sharing network capacities among several companies may increase the utilization of the resources. This high utilization, in
turn, increases the competitive advantage of a participating company
in terms of higher profit and less environmental impact. Furthermore, our approach can be used for decision-making problems outsourced in a cloud environment. Cloud computing provides computing resources and it is widely used by companies to efficiently solve their large-scale decision making problems. However, one of the main issues is the security and the privacy of the stored data. Our approach can be used to mask input data and solve the problem while ensuring data privacy.

There are other interesting research questions about data privacy in
network management. We have presented a transformation-based
approach for capacity control. Although we guarantee to obtain the
exact optimal solutions for each party, our approach may become vulnerable
for potential security breaches especially for small-scale
problems. Investigating the effect of small-scale problems on data-privacy is one of our
future research directions. Another approach to tackle data privacy in collaborative decision making problems could be using the concept of differential-privacy, where the main idea is to perturb the output with a carefully adjusted noise. Such an approach does lead to
approximate solutions but the privacy levels can be quantified and
controlled. This approach is also on our agenda for future research.

\clearpage

\onehalfspacing

\bibliographystyle{apalike}
\bibliography{RM}

\clearpage

\appendix

\section{}
\label{sec:app1}

Suppose for each path or route
$s \in \CS$ that the objective function $\phi_{s}(x_{s})$ consists of $B_s$ breakpoints
(intervals with a length of one) that subdivides the range of
$x_s$. These breakpoints collectively form the set $\CB_s$ for
$s \in \CS$. We introduce the auxiliary variables $x_{b s}$ for
$b \in \CB_s$, and set
\[
x_s = \sum_{b \in \CB_s} x_{b s}.
\]
Since the length of each interval is one, we have
$x_{b s} \leq 1, s \in \CS, ~ b \in \CB_s$. If we denote the partial
revenues by $r_{bs}$, then the new model becomes
\begin{align*}
\maximize & \sum_{s \in \CS}  \sum_{b\in \CB_s} r_{bs}x_{bs}, \\
\subto & \sum_{s \in \CS}  \sum_{b\in \CB_s}  a_{js}x_{bs} \leq c_{j},& j \in \CJ, \\
& \sum_{s \in \CS_k}  \sum_{b\in \CB_s}  a_{js}x_{bs} \leq c_{j},& j \in \CJ_k, k\in\CK, \\
& 0 \leq x_{bs} \leq 1, & s \in \CS, ~ b \in \CB_s.
\end{align*}
Due to the concavity of the objective function, we have
$r_{1s} \geq r_{2s} \geq r_{B_ss}$ for $s \in \CS$. This
structure allows us to partition for $k\in\CK$, the decision variables
and the objective function parameters as
\[
\vx_k = [x_{bs}: s \in \CS_k, b \in \CB_s]\tr \mbox{ and }
\vr_k = [r_{bs} : s \in \CS_k, b \in \CB_s]\tr,
\]
respectively. Again for $k\in\CK$, we next define the $m \times n_k$
matrix $\mA_k$ with $n_k = \sum_{s \in \CS_k} B_s$ and the
$m_k \times n_k$ matrix $\mB_k$ as

\begin{equation}
\label{eq:AkBk}
\mA_k = \left[ \underset{B_s \mbox{ times}}{\underbrace{a_{js} ~
		a_{js} ~ \cdots ~ a_{js}}} \right]_{j \in \CJ, s \in \CS_k}
~\mbox{ and }\hspp
\mB_k = \left[ \underset{B_s \mbox{ times}}{\underbrace{b_{js} ~
		b_{js} ~ \cdots ~ b_{js}}} \right]_{j \in \CJ_k, s \in \CS_k},
\end{equation}
respectively. Here $b_{js} =1$, if path $s$ uses one unit from
capacity $j$; otherwise, $b_{js} =0$. The last step is to introduce
the shared and the private capacity vectors as
\[
\vc = [c_j: j \in \CJ]\tr ~\mbox{ and }~ \vc_k = [c_j: j \in \CJ_k]\tr \mbox{ for all } k \in \CK,
\]
respectively. We are now ready to give our main capacity sharing model with the path-based formulation:
\begin{align*}
Z = ~ & \maximize && \hspace*{-20mm}\sum_{k \in \CK} \vr_k\tr \vx_k,  \\
& \subto    && \hspace*{-20mm}\sum_{k \in \CK} \mA_k\vx_k \leq \vc, &&             && \hspp (\valpha)  \\
&           && \hspace*{-20mm}\mB_k\vx_k \leq  \vc_k,               && k \in \CK,  && \hspp (\valpha_k) \\[2mm]
&           && \hspace*{-20mm}\zv \leq \vx_k \leq \ov, && k \in \CK, &&
\end{align*}
where $\ov$ and $\zv$ stand for the vector of ones and the vector of zeros, respectively.

\section{}
\label{sec:app2}

We have reserved this section for the proofs of our theoretical results,
which we have repeated here for clarity of presentation.

\lemnum{\ref{lem:output}}{
	If we denote the primal optimal solution of
	\eqref{eqn:org_obj_dp}-\eqref{eqn:org_lastcon_dp} by
	$(\vz^*_k, \vv^*_k)_{k \in \CK}$ and the dual optimal variables
	associated with the capacity constrains by
	$(\vbeta^*, \vbeta^*_k)_{k \in \CK}$, then we have
	\[
	\begin{array}{cl}
	\vz_k^* = \vx_k^* + \veta_k, & k \in \CK, \\
	\vbeta^* = \valpha^*, &\\
	\vbeta_k^* = \valpha^*_k + \vxi_k, & k \in \CK.
	\end{array}
	\]
}

\begin{proof}
	We first define $(\vlambda_k)_{k \in \CK}$ as the dual vector
	corresponding to the upper bound constraints
	\eqref{eqn:org_model_nonneg}. Then, the dual of
	\eqref{eqn:org_model_obj}--\eqref{eqn:org_model_nonneg} becomes
	\begin{align}
		\minimize & \vc\tr \valpha +  \dsum_{k \in \CK} \vc_k\tr \valpha_k + \sum_{k\in\CK} \ov \tr \vlambda_k \label{eqn:dual_model_obj} \\
		\subto    & \mA_k\tr\valpha + \mB_k\tr\valpha_k + \vlambda_k \geq \vr_k, & k \in \CK, &&\\
		& \valpha, \valpha_k, \vlambda_k \geq \zv, & k \in \CK. \label{eqn:dual_model_nonneg}
	\end{align}
	Likewise, we also define $(\vnu_k)_{k \in \CK}$ and
	$(\vtheta_k)_{k \in \CK}$ as the dual vectors corresponding to the
	constraints \eqref{eqn:org_ub_dp} and \eqref{eqn:org_lb_dp},
	respectively. Then, the dual of
	\eqref{eqn:org_obj_dp}-\eqref{eqn:org_lastcon_dp} is obtained as
	\begin{align}
		\minimize & (\vc + \dsum_{k \in \CK}\mA_k \veta_k)\tr \vbeta +  \dsum_{k \in \CK}
		(\vc_k + \mB_k\veta_k)\tr \vbeta_k + \sum_{k\in\CK} (\ov +
		\veta_k)\tr \vnu_k - \dsum_{k \in \CK} \veta_k\tr \vtheta_k \label{eqn:dual_obj_dp}\\
		\subto    & \mA_k\tr\vbeta + \mB_k\tr\vbeta_k + \vnu_k - \vtheta_k
		= \vr_k + \mB_k\tr\vxi_k, && k \in \CK, \\
		& \vbeta_k \geq \vxi_k, && k \in \CK,  \\
		& \vbeta, \vnu_k, \vtheta_k \geq \zv, && k \in \CK. \label{eqn:dual_nonneg_dp}
	\end{align}
	Suppose that $(\vz^*_k, \vv^*_k)_{k \in \CK}$ and
	$(\vbeta^*, \vbeta^*_k, \vnu^*_k, \vtheta^*_k)_{k \in \CK}$ are
	the primal and the dual optimal solutions for
	\eqref{eqn:org_obj_dp}--\eqref{eqn:org_lastcon_dp},
	respectively. Let
	\begin{equation}
	\label{eq:lem1}
	\vz^*_k = \vx^*_k + \veta_k, ~ k \in \CK.
	\end{equation}
	We plug this particular vector into
	\eqref{eqn:org_obj_dp}-\eqref{eqn:org_lastcon_dp} and observe that
	$\vv_k \geq \zv$, $k \in \CK$. Thus, $(\vx^*_k)_{k \in \CK}$ is a
	feasible solution for
	\eqref{eqn:org_model_obj}--\eqref{eqn:org_model_nonneg}. Next we
	plug
	\begin{equation}
	\label{eq:lem2}
	\begin{array}{ll}
	\vbeta^* = \valpha^*, & \\
	\vbeta^*_k = \valpha^*_k + \vxi_k, & k \in \CK,\\
	\vnu^*_k = \vlambda_k^*,  & k \in \CK,
	\end{array}
	\end{equation}
	into \eqref{eqn:dual_obj_dp}--\eqref{eqn:dual_nonneg_dp} and note
	that $\vtheta_k \geq \zv$, $k \in \CK$. This shows that
	$(\valpha^*, \valpha^*_k, \vlambda^*_k)_{k \in \CK}$ is a feasible
	solution for
	\eqref{eqn:dual_model_obj}--\eqref{eqn:dual_model_nonneg}. Consequently,
	we have feasible solutions for both the primal problem and the
	dual problem. When we consider the equalities in
	\eqref{eqn:org_obj_dp}-\eqref{eqn:org_lastcon_dp} and
	\eqref{eqn:dual_obj_dp}--\eqref{eqn:dual_nonneg_dp}, we obtain for
	$k \in \CK$ that
	\begin{equation}
	\label{eq:lem3}
	\begin{array}{l}
	\vv_k^* = \vc_k + \mB_k\veta_k - \mB_k\vz_k^* = \vc_k - \mB_k\vx^*_k, \\
	\vtheta^*_k = \mA_k\tr \vbeta^* + \mB_k\tr \vbeta_k^* + \vnu_k^* - \vr_k - \mB_k\tr\vxi_k = \mA_k\tr \valpha^* + \mB_k\tr \valpha_k^* + \vlambda_k^* - \vr_k.
	\end{array}
	\end{equation}
	Recall that the strong duality of linear programming implies
	\[
	\sum_{k \in \CK}(\vr_k + \mB_k\tr\vxi_k)\tr\vz^*_k + \sum_{k \in \CK} \vxi_k\tr\vv^*_k = (\vc + \dsum_{k \in \CK}\mA_k \veta_k)\tr \vbeta^* +  \dsum_{k \in \CK}
	(\vc_k + \mB_k\veta_k)\tr \vbeta^*_k + \sum_{k\in\CK} (\ov +
	\veta_k)\tr \vnu^*_k - \dsum_{k \in \CK} \veta_k\tr \vtheta^*_k.
	\]
	Rewriting this equality with \eqref{eq:lem1}, \eqref{eq:lem2} and \eqref{eq:lem3} shows that
	\[
	\dsum_{k \in \CK} \vr_k\tr \vx^*_k = \vc\tr \valpha^* +  \dsum_{k \in \CK} \vc_k\tr \valpha^*_k + \sum_{k\in\CK} \ov \tr \vlambda^*_k.
	\]
	This establishes that $(\vx_k)_{k \in \CK}$ and
	$(\valpha^*, \valpha^*_k, \vlambda^*_k)_{k \in \CK}$ are the
	primal and dual optimal solutions for
	\eqref{eqn:org_model_obj}--\eqref{eqn:org_model_nonneg},
	respectively. The desired equalities in the hypothesis follow from
	our construction.
\end{proof}

\thmnum{\ref{thm:equiv}}{
    Let $(\vu_k^*, \vw_k^*)_{k \in \CK}$ and
  $(\vgamma^*, \vgamma_k^*)_{k \in \CK}$ be the primal and dual
  optimal solutions of
  \eqref{eqn:pri_obj}-\eqref{eqn:pri_nonneg}. Using again the primal
  and dual optimal solutions, $(\vx_k^*)_{k \in \CK}$ and
  $(\valpha^*, \valpha^*_k)_{k \in \CK}$ of the original problem
  \eqref{eqn:org_model_obj}--\eqref{eqn:org_model_nonneg}, we obtain
  \[
    \begin{array}{cl}
    Z = \bar{Z} - \sum_{k \in \CK} \vr_k\tr \veta_k - \sum_{k \in \CK} (\vc_k + \mB_k\veta_k)\tr\vxi_k, &\\
      \vx_k^* = \mD_k\tr \vu_k^* - \veta_k, & k \in \CK, \\
      \valpha^* = \vgamma^*, & \\
      \valpha_k^* = \mF_k\tr\vgamma^*_k - \vxi_k, & k \in \CK.
    \end{array}
  \]
}

\begin{proof}
  To obtain the linear programming model
  \eqref{eqn:pri_obj}-\eqref{eqn:pri_nonneg}, we apply for
  $k \in \CK$ the change of variables $\mD_k\tr \vu_k = \vz_k$ and
  $\mE_k\tr \vw_k = \vv_k$ to the model
  \eqref{eqn:org_obj_dp}-\eqref{eqn:org_lastcon_dp}. Likewise,
  multiplying both sides of the equality constraints
  \eqref{eqn:org_ck_dp} with $\mF_k$ leads for $k\in\CK$, to the
  change of variables $\vbeta_k = \mF_k\tr\vgamma_k$. Note that both
  sides of the constraints
  \eqref{eqn:org_ub_dp}-\eqref{eqn:org_lastcon_dp} are multiplied by
  $M$-matrices, and hence, feasibility is not affected. Using next
  Lemma \ref{lem:output} implies
  \[
    \begin{array}{cl}
      \vx_k^* = \vz_k^* - \veta_k = \mD_k\tr \vu_k^* - \veta_k, & k \in \CK, \\
      \valpha^* = \vbeta^*= \vgamma^*, & \\
      \valpha_k^* = \vbeta_k^* - \vxi_k = \mF_k\tr\vgamma^*_k - \vxi_k, & k \in \CK.
    \end{array}
  \]
  \citet[Proposition 1]{mangasarian2011privacy} has shown that the
  optimal objective function values of
  \eqref{eqn:org_obj_dp}-\eqref{eqn:org_lastcon_dp} and
  \eqref{eqn:pri_obj}-\eqref{eqn:pri_nonneg} are the same. Recall from
  the proof of Lemma \ref{lem:output} that
  \eqref{eqn:org_obj_dp}-\eqref{eqn:org_lastcon_dp} is obtained from
  \eqref{eqn:org_model_obj}--\eqref{eqn:org_model_nonneg} by applying
  for $k \in \CK$, the transformations $\vz_k = \vx_k + \veta_k$ and
  $\vbeta_k = \valpha_k + \vxi_k$. Using the first transformation, the
  constant term $\sum_{k \in \CK} \vr_k\tr \veta_k$ is subtracted from
  the objective function. Moreover, the same transformation also alters
  the right-hand-side of \eqref{eqn:org_model_ck} as
  $\vc_k + \mB_k\veta_k$, $k \in \CK$. The second transformation with
  this new right-hand-side subtracts additionally the constant term
  $\sum_{k \in \CK} (\vc_k + \mB_k\veta_k)\tr\vxi_k$ from the
  objective function. Adding both constant terms establishes the
  required equality:
  \[
    \bar{Z} = Z + \sum_{k \in \CK} \vr_k\tr \veta_k + \sum_{k \in \CK}
    (\vc_k + \mB_k\veta_k)\tr\vxi_k.
  \]
  This completes the proof.
\end{proof}

\lemnum{\ref{lem:security}}{
  Suppose for $k\in \CK$ that $1 \leq m < n_k \leq s_k$,
  $1 < m_k \leq t_k$, and both $\mA_k$ and $\mB_k$ have full rank.
  Even if all private information of party $k\in \CK$ are known, then
  finding any one of $\mD_k$, $\mF_k$, $\veta_k$ or $\vxi_k$ requires
  obtaining a particular solution to a system of linear equations with
  infinitely many solutions.
}
\begin{proof}
  Using \eqref{eqn:notation}, we first check the relations that
  involve $\mD_k$ as the only unknown. This leaves us with
  $\bar{\mA}_k = \mA_k\mD_k\tr$, where $\mA_k$ is $m \times n_k$
  matrix. Since $m < n_k$ and $\text{rank}(A_k) = m$, this system has
  infinitely many solutions. In all other relations listed in
  \eqref{eqn:notation}, $\mD_k$ is placed along with another private
  random matrix. However, we note that when $n_k=s_k$, we have
  \[
    \bar{\ov}_k - \bar{\mG}_k\bar{\mH}_k\inv\bar{\veta}_k = \mG_k\ov +
    \mG_k\veta_k - \bar{\mG}_k\bar{\mH}_k\inv\bar{\veta}_k = \mG_k\ov.
  \]
  When $n_k=1$, the random matrix $\mG_k$, and consequently, $\mD_k$
  can be obtained. However, we have assumed that $n_k > 1$ leading to
  an underdetermined system. Thus, $\mD_k$ cannot be obtained without
  solving a system with infinitely many solutions. In a similar vein,
  $\veta_k$ appears only in $\tilde{\veta}_k = \mA_k\veta_k$ as the
  sole unknown, but again this system has infinitely many solution for
  $m < n_k$. Matrices $\mF_k$ and $\vxi_k$ do not directly appear in
  any one of the equations without being multiplied with another
  random matrix. Again we note that when $n_k=s_k$, we have
  \[
    \bar{\vc}_k - \bar{\mB}_k\bar{\mH}_k\inv\bar{\veta}_k = \mF_k\vc_k +
    \mF_k\mB_k\veta_k - \bar{\mB}_k\bar{\mH}_k\inv\bar{\veta}_k = \mF_k\vc_k.
  \]
  This system is also undetermined, since $m_k > 1$. Finally, when
  $m_k = t_k$, we have
  \[
    \bar{\vr}_k -
    \bar{\mB}\tr_k\bar{\mF}^{-\intercal}_k\bar{\vxi}\tr_k = \mD_k\vr_k
    + \mD_k\mB_k\tr\vxi_k -
    \bar{\mB}\tr_k\bar{\mF}^{-\intercal}_k\bar{\vxi}\tr_k = \mD_k\vr_k.
  \]
  Given $1 < n_k \leq s_k$, this last system has infinitely many
  solutions as well.
\end{proof}

\end{document}